\def\NZQ{\mathbf}               
\def\NN{{\NZQ N}}
\def\QQ{{\NZQ Q}}
\def\ZZ{{\NZQ Z}}
\def\RR{{\NZQ R}}
\def\PP{{\NZQ P}}
\def\P{\mathcal P}
\newtheorem{theorem}{Theorem}[section]
\newtheorem{proposition}[theorem]{Proposition}
\theoremstyle{definition}
\newtheorem{definition}[theorem]{Definition}
\newtheorem{Theorem}{Theorem}[section]
\newtheorem{Lemma}[Theorem]{Lemma}
\newtheorem{Corollary}[Theorem]{Corollary}
\newtheorem{Proposition}[Theorem]{Proposition}
\newtheorem{Remark}[Theorem]{Remark}
\let\epsilon\varepsilon
\let\phi=\varphi
\let\kappa=\varkappa
\begin{document}

\title{Semigroups of valuations on local rings}

\author{Steven Dale Cutkosky  and Bernard Teissier}
\thanks{The first author was partially supported by NSF and the second by the Miller Scholar in residence program of the University of Missouri at Columbia}


\maketitle
\section*{Introduction}  In recent years the progress and applications of valuation theory have brought to light the importance of understanding the semigroups of values which valuations take on a noetherian ring $(R,m)$ contained in the ring $(R_\nu,m_\nu)$ of the valuation. Only two general facts are known: \par\noindent
The first is that these semigroups are well ordered subsets of the positive part of the value group, of ordinal type at most $\omega^h$, where $\omega$ is the ordinal type of the well ordered set $\NN$ and $h$ is the rank of the valuation (see \cite{ZS}, Appendix 3, Proposition 2). Being well ordered, each value semigroup of a noetherian ring has a unique minimal system of generators, indexed by an ordinal no greater than $\omega^h$. \par\noindent
The second is, in the case where $m_\nu\bigcap R=m$, the Abhyankar inequality
$$\hbox{\rm rr}(\nu)+\hbox{\rm tr}_kk_\nu\leq \hbox{\rm dim}R,$$
between the rational rank of the group of the valuation, the transcendence degree over the residue field of $R$ of the residue field of $R_\nu$ and the dimension of $R$ (see \cite{ZS}, Vol. 2, Appendix 2).\par\noindent

Examples, starting with plane branches (see \cite{Z}  and \cite{Te}), suggest that the structure of the semigroup contains important information on the process of local uniformization. \par
In this paper we shall consider mostly valued noetherian rings $(R,m)\subset (R_\nu, m_\nu)$ such that $m_\nu \bigcap R=m$ and the residual extension $R/m\to R_\nu/m_\nu$ is trivial. We call such valuation rational valuations of $R$. In this case, the finite generation of the semigroup is equivalent to the finite generation over $k_\nu=R_\nu/m_\nu$ of the graded algebra associated to the valuation $$\hbox{\rm gr}_{\nu}R=\bigoplus_{\gamma\in \Gamma} \P_{\gamma}/\P^+_{\gamma},$$
where $$\P_{\gamma}=\{f\in R\vert \nu(f)\geq \gamma\},\ \ \ \P^+_{\gamma}=\{f\in R\vert \nu(f)> \gamma\}$$ and $\Gamma$ is the value group of the valuation.\par\noindent
This equivalence is true because each homogeneous component $\P_{\gamma}/\P^+_{\gamma}$ is non zero exactly if $\gamma$ is in the value semigroup $S=\nu (R\setminus\{0\})$, and then it is a one-dimensional vector space over $R/m=R_\nu/m_\nu$ (see \cite{Te}, section 4).\par
If the semigroup is finitely generated, it is not difficult to find a valuation of a local noetherian domain which has it as semigroup: the semigroup algebra with coefficients in a field $K$ is then a finitely generated algebra. We can define a weight on its monomials by giving to each generator as weight the generator of the semigroup to which it corresponds. Then we define a valuation by deciding that the valuation of a polynomial is the smallest weight of its monomials.\par
The simplest example is that of a subsemigroup of $\NN$; it is necessarily finitely generated (see \cite{Re}, Th. 83 p. 203) and by dividing by the greatest common divisor we may assume that its generators $\gamma_1,\ldots ,\gamma_g$ are coprime, so that it generates the group $\ZZ$. It is the semigroup of values of the $t$-adic valuation on the ring $K[t^{\gamma_1},\ldots ,t^{\gamma_g}]\subset K[t]$ of the monomial curve corresponding to the semigroup.\par
In section \ref{z2} we give an example of a semigroup of values of a valuation on a polynomial ring $L[x,y,z]$ over a field which generates the group $\ZZ^2$ but is not finitely generated as a semigroup. Moreover, the smallest closed cone in $\RR^2$ with vertex $0$ containing the semigroup is rational.\par
  If the semigroup is not finitely generated, very little is known about its structure. While we know that all totally ordered abelian groups of finite rational rank appear as value groups of valuations of rational function fields centered in a polynomial ring, we do not know any general condition implying that a well ordered sub-semigroup of the positive part of a totally ordered group of finite rational rank appears as semigroup of values of a noetherian ring. We shall see in section \ref{ratrk1} a characterization of those well-ordered sub-semigroups of $\QQ_+$ which are the semigroups of values of a $K$-valuation on $K[X,Y]_{(X,Y)}$, but no general result is known for subsemigroups contained in the positive part $\QQ^2_{\succcurlyeq 0}$  of $\QQ^2$ for some total ordering $\succcurlyeq$.\par
We show in section \ref{accu} that a sub-semigroup $S$ of the positive part of a
totally ordered abelian group of finite rational rank, such that $S$ has  ordinal type
$<\omega^{\omega}$,  has no accumulation points, and $S$ has ordinal type $\le \omega^h$, where $h$ is the rank of the group generated by $S$.
These are properties which are held by the semigroup of a valuation on a noetherian ring.\par
Even if one could solve the problem of determining which semigroups of a totally ordered group of (real) rank one come from noetherian rings, an induction on the rank would meet serious difficulties, and one of the questions which appear naturally is that of the position of the generators of the semigroup in regard to the valuation ideals of the valuations with which the given valuation is composed. \par
More precisely, let $\nu$ be a valuation taking non negative values on the noetherian local domain $R$, let $\Psi$ is the convex subgroup of real rank one in the group $\Gamma$ of the valuation, and let $P$ be the center in $R$ of the valuation $\nu_1$ with values in $\Gamma/\Psi$ with which $\nu$ is composed. The valuation $\nu$ induces a residual valuation $\overline \nu$ on the quotient $R/P$ and when the image $q(\gamma)$ of $\gamma$ in $\Gamma/\Psi$ by the canonical quotient map $q\colon \Gamma\to \Gamma/\Psi$ is $\gamma_1$ we have inclusions of the valuation ideals corresponding to $\nu$ and $\nu_1$:
$$\P_{\gamma_1}^+\subset \P_{\gamma}\subset \P_{\gamma_1},$$
\par
Since $R$ is noetherian, the quotients $\P_{\gamma_1}/\P_{\gamma_1}^+$ are finitely generated $R/P$-modules for all $\gamma_1\in \Gamma/\Psi$. Each is endowed by the filtration  $\mathcal F (\gamma_1)$ by the $({\mathcal F}_{\gamma}=\P_{\gamma}/\P_{\gamma_1}^+)_{\gamma\in q^{-1}(\gamma_1)}$ with a structure of filtered $R/P$-module with respect to the filtration $\mathcal F (0)$ of $R/P$ by its valuation ideals $(\overline{\P_{\delta}})_{\delta\in \Psi}$, where $\overline{\P_{\delta}}=\P_\delta R/P=\P_\delta/P$.\par
One could hope (see \cite{Te}) that the associated graded module $$\hbox{\rm gr}_{\mathcal F(\gamma_1)}\P_{\gamma_1}/\P_{\gamma_1}^+=\bigoplus_{\gamma\in q^{-1}(\gamma_1)}\P_{\gamma}/\P_{\gamma}^+$$ is finitely generated over the associated graded ring $$\hbox{\rm gr}_{\overline \nu}R/P=\bigoplus_{\delta\in \Psi_+}\P_{\delta}/\P^+_{\delta}.$$ For rational valuations, this would be equivalent to the fact that for each $\gamma_1>0$ only finitely many new generators of the semigroup appear in $\P_{\gamma_1}\setminus \P^+_{\gamma_1}$. It would set a drastic restriction on the ordinal type of the minimal set of generators of the semigroup $\nu (R\setminus \{0\})$.\par
In sections \ref{z7} and \ref{z8}, we give examples showing that this is not at all the case, and that in fact the cardinality of the set of new generators may vary very much with the value of $\gamma_1$.
One might have hoped that this lack of finiteness is due to some transcendence in the residual extension from $R/P$ to $R_\nu/m_{\nu_1}$ and disappears after some birational extension of $R$ to another noetherian local ring contained in $R_\nu$ which absorbs the transcendence. In one of the examples (section \ref{z8}) there is no residual extension.\par
The conclusion is that the semigroups of values of noetherian rings do not seem to be subject to more constraints than the two stated at the beginning.
\section{A criterion for finite generation}
Given a commutative semigroup $S$, a set $M$ is endowed with a structure of an $S$-module by an operation $S\times M\to M$ written $(s,m)\mapsto s+m$ such that $(s+s')+m=s+(s'+m)$. Recall that $M$ is then said to be a finitely generated $S$-module if there exist finitely many elements $m_1,\ldots, m_k$ in $M$ such that $M=\bigcup_{i=1}^k (S+m_i)$.
\begin{Proposition}\label{Theorem5}  Keeping the notations of the previous section, suppose that $K$ is a field and $R$ is a local domain with residue field $K$ dominated by a valuation ring $R_\nu$ of the field of fractions of $R$. Assume that the residual extension $R/m\to R_\nu/m_\nu$ is trivial.
Then for $\gamma_1\in \Gamma/\Psi$, The associated graded module
$$
\mbox{\rm gr}_{{\mathcal F}(\gamma_1)}{\mathcal P}_{\gamma_1}/{\mathcal P}^+_{\gamma_1}
$$
is   a  finitely generated  $\mbox{\rm gr}_{\overline\nu}R/P$-module
if and only if
$$
F_{\gamma_1}=\{\nu(f)\mid f\in R\mbox{ and }\nu_1(f)=\gamma_1\}
$$
 is a finitely generated  module over the semigroup $\overline F=\overline \nu(R/P\setminus \{0\})$.
\end{Proposition}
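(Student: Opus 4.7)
The plan is to exploit the hypothesis of trivial residue extension: each nonzero homogeneous component of both the graded ring $\mbox{\rm gr}_{\overline\nu}R/P$ and the graded module $\mbox{\rm gr}_{\mathcal F(\gamma_1)}\P_{\gamma_1}/\P^+_{\gamma_1}$ is a one-dimensional $K$-vector space, so finite generation of the module should be equivalent to a purely combinatorial statement about supports. First I would set up the gradings explicitly: $\mbox{\rm gr}_{\overline\nu}R/P$ is graded by $\Psi_+\cup\{0\}$ with support $\overline F\cup\{0\}$; $\mbox{\rm gr}_{\mathcal F(\gamma_1)}\P_{\gamma_1}/\P^+_{\gamma_1}$ is graded by $q^{-1}(\gamma_1)$ with support $F_{\gamma_1}$; and the action induced by multiplication in $R$ sends bidegree $(\delta,\gamma)$ to $\gamma+\delta$. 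Here one uses that for $g\in R\setminus P$ one has $\overline\nu(g \bmod P)=\nu(g)\in\Psi$, so the action is well defined and compatible.

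For the implication ($\Leftarrow$), assume $F_{\gamma_1}=\bigcup_{i=1}^k(\overline F+\gamma^{(i)})$. Choose $f_i\in R$ with $\nu_1(f_i)=\gamma_1$ and $\nu(f_i)=\gamma^{(i)}$, and let $\overline{f_i}\in\P_{\gamma^{(i)}}/\P^+_{\gamma^{(i)}}$ be the associated homogeneous element of the module. Given any $\gamma\in F_{\gamma_1}$, write $\gamma=\gamma^{(i)}+\delta$ with $\delta\in\overline F$, and pick $g\in R\setminus P$ with $\nu(g)=\delta$. Then $gf_i$ has $\nu$-value $\gamma$ and $\nu_1$-value $\gamma_1$, so its class is a nonzero element of the one-dimensional space $\P_\gamma/\P^+_\gamma$, hence spans it over $K$. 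Therefore the $\overline{f_i}$ generate the graded module over $\mbox{\rm gr}_{\overline\nu}R/P$.

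For ($\Rightarrow$), I use the standard observation that a finitely generated graded module over a graded ring is generated by finitely many homogeneous elements (decompose each given generator into its homogeneous parts). Let these generators have degrees $\gamma^{(1)},\ldots,\gamma^{(k)}$, all lying in $F_{\gamma_1}$. For any $\gamma\in F_{\gamma_1}$, pick a nonzero element of $\P_\gamma/\P^+_\gamma$ and express it as a $\mbox{\rm gr}_{\overline\nu}R/P$-linear combination of the generators. By homogeneity, at least one summand must be a nonzero homogeneous term living in degree $\gamma$, which forces $\gamma=\gamma^{(i)}+\delta$ for some $i$ and some $\delta\in\overline F$. This gives $F_{\gamma_1}=\bigcup_{i=1}^k(\overline F+\gamma^{(i)})$, proving the semigroup-module is finitely generated.

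There is no serious obstacle here: the argument is essentially the observation that, once every nonzero homogeneous piece has dimension exactly one over the common residue field $K$, the structure of the module and of its support set as $\overline F$-module encode exactly the same information. The only care needed is in keeping the two valuations $\nu$ and $\nu_1$ and their ideal filtrations separate, and in checking that lifting homogeneous elements of $\mbox{\rm gr}_{\overline\nu}R/P$ to elements of $R\setminus P$ interacts correctly with the action on $\mathcal P_{\gamma_1}/\mathcal P^+_{\gamma_1}$.
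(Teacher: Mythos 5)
Your proof is correct and follows essentially the same route as the paper's: both arguments reduce finite generation of the graded module to a statement about degrees by exploiting that every nonzero homogeneous component of $\mbox{\rm gr}_{\nu}R$ is a one-dimensional $K$-vector space, so that the support sets $\overline F$ and $F_{\gamma_1}$ carry all the information. You simply spell out the two implications in more detail than the paper, which compresses them into the single observation that generation by finitely many homogeneous elements is ``equivalent to an assertion on degrees.''
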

\begin{proof} Let us first remark that since ${\mathcal P}_{\gamma_1}/{\mathcal P}^+_{\gamma_1}$ is an $R/P$-module, the set $F_{\gamma_1}$ is an $\overline F$-module. \par
We know from (\cite{Te}, 3.3-3.5, 4.1) that the non zero homogeneous components of the graded algebra $\mbox{gr}_\nu R =\bigoplus_{\gamma\in \Gamma}{\mathcal P}_{\gamma}/{\mathcal P}^+_{\gamma}$ are one-dimensional $K$-vector spaces whose degrees are in bijection with $F=\nu (R\setminus \{0\})$, and that $\overline F=F\bigcap \Psi$. \par
 The $\mbox{gr}_{\overline\nu}R/P$-module $
\mbox{gr}_{{\mathcal F}(\gamma_1)}{\mathcal P}_{\gamma_1}/{\mathcal P}^+_{\gamma_1}
$ is nothing but the sum of the components of $\mbox{gr}_\nu R$ whose degree is in $q^{-1}(\gamma_1)$. Since we are dealing with graded modules whose homogeneous components are one-dimensional $K$-vector spaces, this module is finitely generated if and only if there exist finitely many homogeneous elements $e_1,\ldots ,e_r \in
\mbox{gr}_{{\mathcal F}(\gamma_1)}{\mathcal P}_{\gamma_1}/{\mathcal P}^+_{\gamma_1}$ such that each homogeneous element of $
\mbox{gr}_{{\mathcal F}(\gamma_1)}{\mathcal P}_{\gamma_1}/{\mathcal P}^+_{\gamma_1}
$ can be written $\overline xe_i$ with $\overline x\in R/P$; this is equivalent to an assertion on degrees, which is exactly that the $\overline F$-module $
F_{\gamma_1}$ is finitely generated.
\end{proof}

\section{An example with value group $\ZZ^2$ and non finitely generated semigroup.}\label{z2}

Let $K$ be a field. Let $r$ be a positive natural number such that
the characteristic of $K$ does not divide
$r$,
and let $a_{ij}$ for $0\le i\le 2$ and $1\le j\le r$
 be algebraically independent over $K$. Let $M$ be the field
$$
M=K\left(\{a_{ij}\mid 0\le i\le 2\mbox{ and }1\le j\le r\}\right).
$$
Let $G\cong\ZZ_r$ be the subgroup of the permutation group $S_r$ generated by the cycle $(1,2,\ldots,r)$. For $\sigma\in G$, define a $K$ automorphism of
$M$ by $\sigma(a_{ij})=a_{i,\sigma(j)}$.  Let $L=M^G$ be the fixed field of $G$.
We have an \'etale, Galois  morphism
$$
\Lambda:\PP_M^2\rightarrow \PP_L^2\cong \PP_M^2/G.
$$
Let $x_0,x_1,x_2$ be homogeneous coordinates on $\PP^2_K$.
Define $p_i=(a_{0i},a_{1i},a_{2i})\in\PP^2_M$ for $1\le i\le r$,
and let
$q=\Lambda(p_1)$. Since
$\{p_1,\ldots,p_r\}$ is an orbit of $\Lambda$ under the
action of $G$, we have that   $q=\Lambda(p_i)$ for $1\le i\le r$,
and ${\mathcal I}_q{\mathcal O}_{\PP^2_M}={\mathcal I}_{p_1}\cdots{\mathcal I}_{p_r}$.

Let $\overline\nu$ be the $m$-adic valuation of ${\mathcal O}_{\PP_L^2,q}$.
\begin{equation}\label{eq20}
\Gamma(\PP^2_L,{\mathcal O}_{\PP^2}(d)\otimes {\mathcal I}_q^n)
=\{F(x_0,x_1,x_2)\in\Gamma(\PP^2_L,{\mathcal O}_{\PP^2}(d))\mid
\overline\nu (F(1,\frac{x_1}{x_0},\frac{x_2}{x_0}))\ge n\}.
\end{equation}
Since $M$ is flat over $L$,
\begin{equation}\label{eq21}
\Gamma(\PP^2_L,{\mathcal O}_{\PP^2}(d)\otimes {\mathcal I}_q^n)\otimes_LM
=\Gamma(\PP^2_M,{\mathcal O}_{\PP^2}(d)\otimes (({\mathcal I}_{p_1}\cdots
{\mathcal I}_{p_r})^n)).
\end{equation}

Let $\nu_1$ be the $m$-adic valuation on $R=L[x_1,x_1,x_2]_{(x_0,x_1,x_2)}$.
The valuation ring $R_{\nu_1}$ of $\nu_1$ is $L[x_0,\frac{x_1}{x_0},\frac{x_2}{x_0}]_{(x_0)}$, with residue field
$R_{\nu_1}/ m_{\nu_1}\cong L(\frac{x_1}{x_0},\frac{x_2}{x_0})$. The inclusion of the valuation ring $R_{\overline\nu}$ of $\overline\nu$ into its quotient field
$L(\frac{x_1}{x_0},\frac{x_2}{x_0})$ determines a composite valuation
$\nu=\nu_1\circ \overline\nu$ on the field $L(x_0,x_1,x_2)$, which dominates $R$.
The valuation $\nu$ is rational and its value group of $\nu$ is $\ZZ\times\ZZ$ with the Lex order.

For $f\in L[x_0,x_1,x_2]$, let $L_f(x_0,x_1,x_2)$ be the leading form of
$f$. $L_f$ is a homogeneous form whose degree is the order $\mbox{ord}(f)$ of $f$ at the
homogeneous maximal ideal of $k[x_0,x_1,x_2]$.  The value of $f$ is
$$
\nu(f)=(\mbox{ord}(f),\overline\nu(L_f(1,\frac{x_1}{x_0},\frac{x_2}{x_0})))\in\NN\times\NN.
$$

For $(d,n)\in\NN\times\NN$, we have $L$ module isomorphisms
$$
{\mathcal P}_{(d,n)}\cong
\Gamma(\PP^2_L,{\mathcal O}_{\PP^2}(d)\otimes {\mathcal I}_q^n)
\bigoplus\left(\bigoplus_{m>d}\Gamma(\PP^2_L,{\mathcal O}_{\PP^2}(m))\right),
$$
and
\begin{equation}\label{eq22}
{\mathcal P}_{(d,n)}/{\mathcal P}_{(d,n)}^+\cong
\Gamma(\PP^2_L,{\mathcal O}_{\PP^2}(d)\otimes {\mathcal I}_q^n)/
\Gamma(\PP^2_L,{\mathcal O}_{\PP^2}(d)\otimes {\mathcal I}_q^{n+1}).
\end{equation}

\begin{Proposition}\label{Prop1}
Suppose that $r=s^2$ where $s\ge 4$ is a natural number.
  Then
\begin{equation}\label{eq23}
{\mathcal P}_{(d,n)}/{\mathcal P}_{(d,n)}^+=0 \mbox{ if $d\le ns$}
\end{equation}
and
if $s'$ is a real number such that $s'>s$, then there exist natural numbers
$d,n$ such that $d<ns'$ and
\begin{equation}\label{eq24}
{\mathcal P}_{(d,n)}/{\mathcal P}_{(d,n)}^+\ne 0.
\end{equation}
\end{Proposition}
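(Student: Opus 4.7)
My plan is to translate (\ref{eq23}) and (\ref{eq24}) into questions about linear systems of plane curves in $\PP^2_M$ through the points $p_1,\ldots,p_r$ with prescribed multiplicities, and then apply Nagata's theorem. Since the coordinates $a_{ij}$ are algebraically independent over $K$, the points $p_1,\ldots,p_r$ are very general in $\PP^2_M$; and because $r=s^2$ with $s\ge 4$ (so $r\ge 16$), Nagata's theorem applies and asserts that any curve in $\PP^2_M$ of degree $d$ passing through every $p_i$ with multiplicity $\ge m$ satisfies $d>ms$. Via (\ref{eq22}), the base change (\ref{eq21}), and the identity ${\mathcal I}_q{\mathcal O}_{\PP^2_M}={\mathcal I}_{p_1}\cdots{\mathcal I}_{p_r}$, the $L$-dimension of $\Gamma(\PP^2_L,{\mathcal O}(d)\otimes{\mathcal I}_q^m)$ equals the $M$-dimension of $\Gamma(\PP^2_M,{\mathcal O}(d)\otimes ({\mathcal I}_{p_1}\cdots{\mathcal I}_{p_r})^m)$, so the whole proposition reduces to computing or estimating these dimensions for $m=n$ and $m=n+1$.

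For (\ref{eq23}), if $d\le ns$ then Nagata with $m=n$ directly forces $\Gamma(\PP^2_M,{\mathcal O}(d)\otimes({\mathcal I}_{p_1}\cdots{\mathcal I}_{p_r})^n)=0$, hence $\Gamma(\PP^2_L,{\mathcal O}(d)\otimes{\mathcal I}_q^n)=0$, and therefore ${\mathcal P}_{(d,n)}/{\mathcal P}_{(d,n)}^+=0$.

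For (\ref{eq24}), given $s'>s$, I choose an integer $k$ with $(s-3)/2<k<s$; such a $k$ exists for $s\ge 4$, and the explicit choice $k=\lceil s/2\rceil$ works. Set $d=ns+k$ and regard $n$ as a parameter to be chosen large. A direct expansion gives
$$
\binom{d+2}{2}-s^{2}\binom{n+1}{2}=\tfrac{1}{2}\bigl(ns(2k+3-s)+(k+1)(k+2)\bigr),
$$
and both summands on the right are strictly positive by the choice of $k$, so this quantity is positive for every $n\ge 1$. Since imposing multiplicity $\ge n$ at a point costs at most $\binom{n+1}{2}$ linear conditions on $\Gamma(\PP^2_M,{\mathcal O}(d))$, the dimension count forces $\Gamma(\PP^2_M,{\mathcal O}(d)\otimes({\mathcal I}_{p_1}\cdots{\mathcal I}_{p_r})^n)\ne 0$, and hence $\Gamma(\PP^2_L,{\mathcal O}(d)\otimes{\mathcal I}_q^n)\ne 0$. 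On the other hand, $k<s$ gives $d<(n+1)s$, so Nagata's theorem applied with $m=n+1$ forces $\Gamma(\PP^2_L,{\mathcal O}(d)\otimes{\mathcal I}_q^{n+1})=0$. Combining, ${\mathcal P}_{(d,n)}/{\mathcal P}_{(d,n)}^+\ne 0$; and since $d/n=s+k/n$, any $n>k/(s'-s)$ ensures $d<ns'$.

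The only non-elementary ingredient is Nagata's theorem for $r=s^2\ge 16$ very general points of $\PP^2$, which supplies both the vanishing in (\ref{eq23}) and the vanishing of the denominator in (\ref{eq24}); the remainder of the argument is a routine dimension count combined with the faithfully flat Galois descent from $L$ to $M$ that has already been set up before the proposition.
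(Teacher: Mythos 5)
Your proposal is correct and follows essentially the same route as the paper: Nagata's theorem for $r=s^2\ge 16$ generic points gives the vanishing (\ref{eq23}), and the naive parameter count $\binom{d+2}{2}-r\binom{n+1}{2}>0$ (the paper gets the same bound via Riemann--Roch and Serre duality on the blow-up) gives nonvanishing of $\Gamma(\PP^2_L,{\mathcal O}(d)\otimes{\mathcal I}_q^n)$, with (\ref{eq21}) and (\ref{eq22}) translating everything back to ${\mathcal P}_{(d,n)}/{\mathcal P}_{(d,n)}^+$. The only (harmless) difference is in pinning down $(d,n)$ for (\ref{eq24}): you choose $d=ns+k$ explicitly with $(s-3)/2<k<s$ so that the count and Nagata's bound at multiplicity $n+1$ hold simultaneously, whereas the paper fixes a rational slope $s<\lambda<s'$ and takes the largest $n$ for which the section space is nonzero, getting the vanishing of the denominator from maximality.
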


\begin{proof}
We have that $\Gamma(\PP^2_M,{\mathcal O}_{\PP^2}(d)\otimes\left(({\mathcal I}_{p_1}\cdots {\mathcal I}_{p_r})^n)\right)=0$
if $d\le ns$ by (1) of Proposition 1 of \cite{N}.  Thus (\ref{eq23}) is true by
formulas (\ref{eq21}) and (\ref{eq22}).

Suppose that $s'$ is a real number greater than $s$.
 Using the Riemann Roch Theorem and Serre Duality on the blow up of $\PP^2_L$ at $q$, as in the proof of (2) of Proposition 1 of \cite{N}, we compute that
$$
\mbox{dim}_L(\Gamma(\PP^2_L,{\mathcal O}_{\PP^2}(d)\otimes {\mathcal I}_q^n)
\ge \frac{d(d+3)}{2}-r\frac{n(n+1)}{2}+1>0
$$
if
$$
\frac{d(d+3)}{2}\ge r\frac{n(n+1)}{2}.
$$
Fixing a rational number $\lambda$ with $s'>\lambda>s$, we see that we can find natural numbers $d$ and $m$ with $\frac{d}{m}=\lambda$ and
$$
\mbox{dim}_L(\Gamma(\PP^2_L,{\mathcal O}_{\PP^2}(d)\otimes {\mathcal I}_q^m)\ne 0.
$$
Let $n\ge m$ be the largest natural number such that
$$
\mbox{dim}_L(\Gamma(\PP^2_L,{\mathcal O}_{\PP^2}(d)\otimes {\mathcal I}_q^n))\ne 0.
$$
Then $d<ns'$ and ${\mathcal P}_{(d,n)}/{\mathcal P}_{(d,n)}^+\ne0$ by (\ref{eq22}). Thus (\ref{eq24}) holds.
\end{proof}

The following result follows from Proposition \ref{Prop1}

\begin{Proposition}\label{Prop2} Suppose that $s\ge 4$ is a natural number. Then there exists a field $M$,  a rational function field
$M(x_0,x_1,x_2)$ in 3 variables, and a rank 2 valuation $\nu$ of $M(x_0,x_1,x_2)$ with value group $\ZZ\times\ZZ$ with the Lex order,
which dominates the regular local ring
$$
R=M[x_0,x_1,x_2]_{(x_0,x_1,x_2)}
$$
such that
$$
\bigoplus_{(d,n)\in \NN\times \NN}{\mathcal P}_{(d,n)}/{\mathcal P}_{(d,n)}^+
$$
is not a finitely generated $R/m_R\cong M$ algebra.

The semigroup $\Gamma=\nu(R\setminus \{0\})$ is not finitely generated as a semigroup. Further,  the closed rational cone generated by $\Gamma$ in $\RR^2$ is the rational polyhedron
$$
\{(d,n)\in\RR^2\mid n\ge 0\mbox{ and }d\ge ns\}.
$$
\end{Proposition}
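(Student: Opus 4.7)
The plan is to inherit the valuation $\nu$ constructed above with $r = s^2$ (identifying the field $M$ of the statement with $L = M^G$ from the setup, an unfortunate reuse of the letter $M$), and to deduce all three claims from Proposition \ref{Prop1}. By construction, $\nu$ is a rank-$2$ valuation of $L(x_0,x_1,x_2)$ with value group $\ZZ \times \ZZ$ in the lex order, dominating $R = L[x_0,x_1,x_2]_{(x_0,x_1,x_2)}$.

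The first step is to locate the semigroup $\Gamma = \nu(R \setminus \{0\})$ with respect to the line $d = ns$. The explicit formula for $\nu$ gives $\nu(x_0) = (1,0) \in \Gamma$, while (\ref{eq23}) forces every $(d,n) \in \Gamma$ with $n \geq 1$ to satisfy $d > ns$. Hence the closed cone generated by $\Gamma$ is contained in $C = \{(d,n) \in \RR^2 : n \geq 0,\ d \geq ns\}$. Conversely, $(1,0) \in \Gamma$ supplies the edge $n = 0$, and (\ref{eq24}) applied with $s' = s + 1/m$ for $m = 1, 2, \ldots$ produces points of $\Gamma$ whose rays have slope tending down to $s$; by closure the edge $d = ns$ lies in the cone, and these two edges generate $C$.

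For the non-finite-generation of $\Gamma$, I would argue by contradiction. If $\gamma_1, \ldots, \gamma_k \in \Gamma$ generated $\Gamma$ as a semigroup, then $\sum_i \RR_{\geq 0}\gamma_i$ would be a finitely generated rational polyhedral cone, hence closed, hence equal to $C$. Each extremal ray of $C$ would therefore be spanned by some $\gamma_i$, and in particular the extremal ray $d = ns$ would force $\gamma_i = (ks, k)$ for some integer $k \geq 1$. But at this point $d = ks = ns$, so (\ref{eq23}) gives ${\mathcal P}_{(ks,k)}/{\mathcal P}_{(ks,k)}^+ = 0$, contradicting $\gamma_i \in \Gamma$. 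The non-finite-generation of the graded algebra $\mbox{gr}_\nu R$ as an $L$-algebra follows formally: any finite homogeneous system of $L$-algebra generators would have degrees $\delta_1, \ldots, \delta_r \in \Gamma$ such that every $\gamma \in \Gamma$ satisfies $\gamma = \sum a_i \delta_i$ (since the nonzero degree-$\gamma$ piece is spanned by monomials in those generators), making $\Gamma$ finitely generated. I expect the extremal-ray step to be the main subtlety, since it depends on the strict inequality $d > ns$ in (\ref{eq23}) holding exactly at the boundary of the cone.
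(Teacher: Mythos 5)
Your proposal is correct and fills in exactly the argument the paper leaves implicit (the paper simply states that Proposition 3.2 follows from Proposition 3.1 without elaboration). The key steps — showing $\Gamma$ avoids the edge $d = ns$ by (\ref{eq23}), getting arbitrarily close to it by (\ref{eq24}), and observing that a finitely generated sub-semigroup of $\NN\times\NN$ whose generated cone is the closed polyhedron $C$ would need a generator on each extremal ray of $C$, which the boundary exclusion forbids — are the intended route, and the reduction from the graded algebra to the semigroup via one-dimensionality of the homogeneous pieces is the standard equivalence recalled in the introduction. Your identification of the proposition's field $M$ with the fixed field $L = M^G$ of Section 2 (an unfortunate notational collision in the paper) is also the right reading.
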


\section{semigroups of ordinal type $\omega^h$}\label{accu}

Suppose that $G$ is an ordered abelian group of finite rank $n$. Then $G$ is order isomorphic to a
subgroup of $\RR^{n}$, where $\RR^{n}$ has the lex order (see Proposition 2.10 \cite{Ab}). If $G$
is the value group of a valuation $\nu$ dominating a Noetherian local ring $R$, and $S$ is the
semigroup of values attained by $\nu$ on $R$, then it can be shown that $S$ has no accumulation
points in $\RR^n$.

In this section we prove that this property is held by any well ordered semigroup $S$ of ordinal
type $\le \omega^h$,  which is contained in the nonnegative part of $\RR^{n}$. The proof relies on
the following lemma. The heuristic idea of the proof of the lemma is  that  the semigroup generated
by a set with an accumulation point  has many accumulation points, accumulations of accumulation
points, and so on.

\begin{Lemma}\label{LemmaO1} Let  $A$ be a well ordered set, which is contained in the
nonnegative part of $\RR$. Suppose that $A$ has an accumulation point in $\RR$, for the Euclidean
topology. For $m$ a positive integer, let
$$
mA=\{x_1+x_2+\cdots+x_m\mid x_1,\ldots,x_m\in A\}.
$$
 Then $mA$ contains a well ordered subset of ordinal type $\omega^m$.\end{Lemma}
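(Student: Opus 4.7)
The plan is to induct on $m$, after first reducing to a controlled approximating sequence. Because $A$ is well ordered it contains no strictly decreasing sequence, so any accumulation point is approached only from below; hence I would extract a strictly increasing sequence $s_1<s_2<\cdots$ in $A$ with $s_n\to\alpha$. Write $T=\{s_n:n\geq 1\}$ and, for $N\geq 0$, $T_N=\{s_n:n>N\}$. Each $T_N$ is a well ordered subset of $\RR_{\geq 0}$ having $\alpha$ as an accumulation point, so the inductive hypothesis will be available on every tail. The base case $m=1$ is immediate, since $T$ itself has order type $\omega$.

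For the inductive step $m\to m+1$, I would realize $\omega^{m+1}=\omega^m\cdot\omega$ as an $\omega$-stack of blocks of type $\omega^m$. Fix indices $k_1<k_2<\cdots$ and set $\epsilon_i=s_{k_{i+1}}-s_{k_i}>0$. For each $i\geq 2$, put $\delta_i=\epsilon_{i-1}/m$ and choose $N_i$ so that $s_n>\alpha-\delta_i$ for all $n>N_i$. Applying the inductive hypothesis to $T_{N_i}$ produces a subset $B^{(i)}\subset mT_{N_i}$ of ordinal type $\omega^m$, and every element of $B^{(i)}$ then lies in $(m\alpha-m\delta_i,\,m\alpha)$. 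Take $B^{(1)}\subset mT$ of type $\omega^m$ arbitrarily, and set
\[
B_{m+1}=\bigcup_{i\geq 1}\bigl(s_{k_i}+B^{(i)}\bigr)\subset(m+1)T\subset(m+1)A.
\]

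The key verification is that the blocks stack strictly. Because $\omega^m$ is a limit ordinal, $B^{(i)}$ has no maximum, and every element of the $i$-th block is at most $\sup(s_{k_i}+B^{(i)})\leq s_{k_i}+m\alpha$; meanwhile every element of the $(i+1)$-st block satisfies
\[
s_{k_{i+1}}+b\;>\;s_{k_{i+1}}+m\alpha-m\delta_{i+1}\;=\;s_{k_i}+m\alpha,
\]
using $m\delta_{i+1}=\epsilon_i=s_{k_{i+1}}-s_{k_i}$. Thus $B_{m+1}$ is well ordered of type $\omega^m\cdot\omega=\omega^{m+1}$, completing the induction. The main obstacle, and the reason for the seemingly roundabout construction, is that the naïve idea of shifting a single fixed $B_m$ by $\omega$-many elements $s_{k_i}\in T$ is doomed: the gaps $\epsilon_i$ must tend to $0$, yet $B_m$ has fixed positive diameter, so shifted copies cannot stay disjoint, let alone stacked. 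The remedy is to let $B^{(i)}$ depend on $i$, squeezing it into a window of width $m\delta_i<\epsilon_{i-1}$ near $m\alpha$, which is exactly what the inductive hypothesis provides once it is applied to the tail $T_{N_i}$ rather than to $T$ itself.
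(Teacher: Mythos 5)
Your proof is correct, but it proceeds by a genuinely different route from the paper's. The paper gives a direct, one-shot construction: it builds an explicit order-preserving injection $\NN^m\to mA$ (with $\NN^m$ in lex order) by choosing nested shift functions $\sigma^m_1(a_1),\sigma^m_2(a_1,a_2),\ldots$ so that the gap $\lambda_{a_i+1}-\lambda_{a_i}$ at each level dominates the tail $\overline\lambda-\lambda_{\sigma^m_i}$ at the next level; the image of the lex-ordered $\NN^m$ is then the desired set of type $\omega^m$. You instead induct on $m$, realizing $\omega^{m+1}$ as $\omega^m\cdot\omega$: the key move is to apply the inductive hypothesis not to $T$ itself but to a tail $T_{N_i}$ short enough that the resulting $\omega^m$-block fits inside the window of width $\epsilon_{i-1}$, so the translates $s_{k_i}+B^{(i)}$ stack consecutively. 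The two proofs encode the same quantitative fact --- that near the accumulation point the tails are small enough to fit inside the gaps --- but your inductive organization makes the stacking structure explicit and avoids the bookkeeping of the $\sigma$-functions, at the cost of needing the hypothesis to hold uniformly on tails (which your setup handles correctly since any tail of $T$ is still well ordered with the same accumulation point). One cosmetic remark: your appeal to $\omega^m$ being a limit ordinal is unnecessary; every element of $B^{(i)}$ is already strictly less than $m\alpha$ because it is a sum of $m$ elements of $T$, each strictly below $\alpha$, which is all the stacking inequality needs.
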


\begin{proof} Let $\overline\lambda$ be an accumulation point of $A$. Since $A$ is well ordered, there exist
$\lambda_i\in A$ for $i\in\NN$, such that $\lambda_i<\lambda_j$ for $i<j$ and $\lim_{i\mapsto
\infty}\lambda_i=\overline\lambda$. Let $T_1=\{\lambda_i\}_{i\in \NN}$. $T_1\subset A$ is well
ordered of ordinal type $\omega$.

For $m\ge 2$ we will construct a well ordered subset $T_m$ of $mA$ which has ordinal type
$\omega^m$.

For $a_1\in\NN$, choose $\sigma^m_1(a_1)\in\NN$ such that
$$
\lambda_{a_1}+\overline\lambda<\lambda_{a_1+1}+\lambda_{\sigma_1^m(a_1)}.
$$
Now for $a_2\in \NN$, choose $\sigma_2^m(a_1,a_2)\in\NN$ such that
$$
\lambda_{\sigma_1^m(a_1)+a_2}+\overline\lambda<\lambda_{\sigma_1^m(a_1)+a_2+1}+\lambda_{\sigma_2^m(a_1,a_2)}.
$$
We iterate for $2\le i\le m$ to choose for each $a_i\in \NN$, $\sigma_i^m(a_1,a_2,\ldots,a_i)\in\NN$
such that
$$
\lambda_{\sigma_{i-1}^m(a_1,\ldots,a_{i-1})+a_i}+\overline\lambda<
\lambda_{\sigma_{i-1}^m(a_1,\ldots,a_{i-1})+a_i+1}+\lambda_{\sigma_i^m(a_1,\ldots,a_i)}.
$$

Let $T_m$ be the well ordered subset of $mA$ which is the image of the order preserving inclusion
$\NN^m\rightarrow mA$ where $\NN^m$ has the lex order, defined by
$$
(a_1,\ldots,a_m)\mapsto \lambda_{a_1+1}+\left( \sum_{i=2}^m
\lambda_{\sigma^m_{i-1}(a_1,\ldots,a_{i-1})+a_i+1}\right).
$$
By its construction, $T_m$ has ordinal type $\omega^m$.
 \end{proof}

\begin{Theorem}\label{TheoremO2}
Let $S$ be a  well ordered subsemigroup of the nonnegative part of $\RR^n$ (with the lex order) for
some $n\in\NN$. Suppose that $S$ has ordinal type $\le \omega^h$ for some $h\in\NN$.
  Then $S$ does not have an accumulation point in $\RR^n$ for the Euclidean
topology.
\end{Theorem}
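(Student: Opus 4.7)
My plan is to argue the contrapositive: if $S$ has a Euclidean accumulation point $p\in\RR^n$, then $\mbox{ord}(S)\geq\omega^{\omega}$, contradicting $\mbox{ord}(S)\leq\omega^h$. The engine will be a multi-dimensional analogue of Lemma~\ref{LemmaO1}: if $A=\{a^{(k)}\}_{k\in\NN}$ is a strictly lex-increasing sequence in the nonnegative part of $\RR^N$ that Euclidean-converges to some $\bar a$, then for every positive integer $m$ the set of $m$-fold sums
$$
mA=\{a^{(k_1)}+\cdots+a^{(k_m)}\mid k_1,\ldots,k_m\in\NN\}
$$
contains a subset of lex order type $\omega^m$.

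Granting this generalized lemma, the theorem follows quickly. Since $p$ is an accumulation point, I can pick distinct $s^{(k)}\in S$ with $s^{(k)}\to p$ in the Euclidean topology. Any infinite sequence in a linear order has an infinite monotonic subsequence, and in a well-ordered set an infinite weakly decreasing sequence must be eventually constant; distinctness then forces the monotone subsequence to be strictly lex-increasing. Applying the generalized lemma to this subsequence with $N=n$ yields, for every $m$, a subset of $mA$ of lex order type $\omega^m$. Since $S$ is a subsemigroup, $mA\subseteq S$, so $\mbox{ord}(S)\geq\omega^m$ for every $m$, hence $\mbox{ord}(S)\geq\omega^{\omega}>\omega^h$, contradicting the hypothesis.

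The generalized lemma will be proved by induction on $N$. The case $N=1$ is exactly Lemma~\ref{LemmaO1}. For the inductive step, strict lex-increase forces $a^{(k)}_1$ to be weakly increasing, and by passing to a subsequence I may assume it is either strictly increasing or constant. In the strictly increasing case, $\pi_1(A)\subset\RR_{\geq 0}$ is well-ordered of type $\omega$ with Euclidean accumulation at $\bar a_1$, so Lemma~\ref{LemmaO1} provides a subset $B\subset m\pi_1(A)$ of type $\omega^m$, and choosing any preimage in $mA$ of each element of $B$ yields a lex-subset of the same order type, because the first coordinate dominates the lex order. In the constant case, $a^{(k)}_1=\bar a_1$ for all $k$, and $\tilde a^{(k)}:=(a^{(k)}_2,\ldots,a^{(k)}_N)$ is a strictly lex-increasing sequence in $\RR^{N-1}_{\geq 0}$ with Euclidean accumulation at $(\bar a_2,\ldots,\bar a_N)$; the inductive hypothesis supplies an $\omega^m$-subset of $m\{\tilde a^{(k)}\}$, which lifts to $mA$ because the corresponding $m$-fold sums in $\RR^N$ all share the first coordinate $m\bar a_1$, so the lex order on this fiber coincides with the lex order on $\RR^{N-1}$.

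The substantive combinatorial work is already contained in Lemma~\ref{LemmaO1}; the remaining challenge is bookkeeping to guarantee that ordinal types survive the projections and lifts, which I expect to be the main source of subtlety. The crucial observation used throughout is that in the lex order, the first coordinate dominates: two elements with distinct first coordinates are ordered by those coordinates alone, while elements sharing a fixed first coordinate are ordered by lex on the remaining coordinates.
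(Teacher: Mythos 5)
Your proof is correct in substance and takes a genuinely different route. Both arguments reduce to Lemma~\ref{LemmaO1} via an induction on the ambient dimension, but you induct on a different object. The paper inducts on the theorem itself: it applies the inductive hypothesis to the projected \emph{semigroup} $\pi(S)\subset\RR^{n-1}$ to conclude that $\pi(S)$ has no accumulation points, uses a small topological argument to confine $S$ near the hypothetical accumulation point $\alpha$ to a single fiber $\pi^{-1}(\{\pi(\alpha)\})$, and then invokes Lemma~\ref{LemmaO1} on that fiber. You instead promote Lemma~\ref{LemmaO1} to a purely sequential statement --- that the $m$-fold sumset of any strictly lex-increasing, Euclidean-convergent sequence in $\RR^N$ contains an $\omega^m$-chain --- proved by induction on $N$, and apply it once at the end; the semigroup structure of $S$ enters only through $mA\subseteq S$. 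Your inductive engine never needs $\pi(S)$ to be a semigroup, nor the bound $\mathrm{ord}(S)\le\omega^h$ to feed through the induction, so it is somewhat more elementary, at the cost of redoing by hand (via the strictly-increasing-versus-constant dichotomy on the first coordinate) what the paper extracts from the topological step.

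One small gap to close: in the constant-first-coordinate case of your inductive step, the truncated sequence $\tilde a^{(k)}=(a^{(k)}_2,\ldots,a^{(k)}_N)$ need not lie in the nonnegative part of $\RR^{N-1}$ under the lex order when $\bar a_1>0$, so your generalized lemma as stated does not formally apply to it. The fix is easy --- the nonnegativity hypothesis plays no role in the proof of Lemma~\ref{LemmaO1}, so you should simply drop it from your generalized lemma (equivalently, translate $\tilde a^{(k)}$ by a constant vector, which leaves the order type of every subset of the $m$-fold sumset unchanged) --- but as written the induction does not quite close.
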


\begin{proof} We prove the theorem by induction on $n$. For $n=1$, the theorem follows from  Lemma
\ref{LemmaO1}. Suppose that the theorem is true for subsemigroups of $\RR^{n-1}$.

Suppose that $S\subset \RR^n$ has an accumulation point $\alpha$.

Let $\pi:\RR^n\rightarrow \RR^{n-1}$ be projection onto the first $n-1$ factors. For $x,y\in
\RR^n$, we have that $x\le y$ implies $\pi(x)\le\pi(y)$.  Thus the set $\pi(S)$ is a well ordered
semigroup, which  has ordinal type $\le \omega^h$ and is contained in the nonnegative part of
$\RR^{n-1}$. Let $\overline \alpha=\pi(\alpha)$. By the induction assumption, $\pi(S)$ has no
accumulation points. Thus $\overline \alpha\in\pi(S)$, and there exists an open neighborhood $U$ of
$\overline \alpha$ in $\RR^{n-1}$ such that $U\cap \pi(S)=\{\overline \alpha\}$. Thus $\pi^{-1}(U)\cap
S=\pi^{-1}(\{\overline \alpha\})\cap S$, and $\alpha$ is an accumulation point of $\pi^{-1}(\{\overline
\alpha\})\cap S$.

Projection on the last factor is a natural homeomorphism of $\pi^{-1}(\{\overline \alpha\})$ to $\RR$
which is order preserving. Let $A\subset \RR$ be the image of $\pi^{-1}(\{\overline \alpha\})\cap S$.
Since $A$ is a well ordered set which has an accumulation point, by Lemma \ref{LemmaO1}, $mA$ has a
subset which has ordinal type $\omega^m$ for all $m\ge 1$.  Now projection onto the last factor
identifies  a subset of $\pi^{-1}(\{m\overline \alpha\})\cap S$ with $mA$ for all $m\ge 1$. Thus the
ordinal type of $S$ is $\ge \omega^m$ for all $m\in\NN$, a contradiction.
\end{proof}

A variation of the proof of Theorem \ref{TheoremO2} proves the following corollary.

\begin{Corollary}\label{CorollaryO3}
Let $S$ be a  well ordered subsemigroup of the nonnegative part of $\RR^n$ (with the lex order) for
some $n\in\NN$. Suppose that $S$ has ordinal type $\le \omega^h$ for some $h\in\NN$.
  Then $S$ has ordinal type $\le \omega^n$.
\end{Corollary}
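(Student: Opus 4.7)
The plan is to induct on $n$, reusing the projection argument of Theorem \ref{TheoremO2} and then combining the resulting structural bound with a small amount of ordinal arithmetic. Throughout, write $\mbox{type}(X)$ for the ordinal type of a well ordered set $X$.

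For the base case $n=1$, Theorem \ref{TheoremO2} already asserts that $S\subset\RR_{\ge 0}$ has no accumulation points in $\RR$. It then suffices to observe that any well ordered subset of $\RR$ of ordinal type at least $\omega+1$ must have an accumulation point: its first $\omega$ elements form a strictly increasing sequence bounded above by the $(\omega+1)$-th element and therefore converge in $\RR$ to an accumulation point of the set. Hence $\mbox{type}(S)\le\omega$, establishing the case $n=1$.

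For the inductive step, assume the result for $n-1$, and let $\pi:\RR^n\to\RR^{n-1}$ be projection onto the first $n-1$ coordinates. Exactly as in Theorem \ref{TheoremO2}, $\pi(S)$ is a well ordered subsemigroup of $\RR^{n-1}_{\ge 0}$, and its ordinal type is at most that of $S$, hence $\le\omega^h$. By the inductive hypothesis, $\mbox{type}(\pi(S))\le\omega^{n-1}$. For each $\overline\alpha\in\pi(S)$, let $A_{\overline\alpha}\subset\RR_{\ge 0}$ denote the image of $S\cap\pi^{-1}(\overline\alpha)$ under projection to the last coordinate; this is an order-preserving identification, since in the lex order two points sharing the first $n-1$ coordinates are compared by their last. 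Running the argument of Theorem \ref{TheoremO2} on the fiber: if $A_{\overline\alpha}$ had an accumulation point in $\RR$, then Lemma \ref{LemmaO1} would produce a subset of $mA_{\overline\alpha}$ of ordinal type $\omega^m$ for every $m\ge 1$, which via the semigroup operation is identified with a subset of $S\cap\pi^{-1}(m\overline\alpha)\subset S$, contradicting $\mbox{type}(S)\le\omega^h$. So $A_{\overline\alpha}$ has no accumulation point in $\RR$, and by the base case $\mbox{type}(A_{\overline\alpha})\le\omega$.

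Finally, the lex order decomposes $S$ as the ordinal sum of its fibers, indexed in order by $\pi(S)$, giving
$$
\mbox{type}(S)=\sum_{\overline\alpha\in\pi(S)}\mbox{type}(A_{\overline\alpha})\le\sum_{\overline\alpha\in\pi(S)}\omega=\omega\cdot\mbox{type}(\pi(S))\le\omega\cdot\omega^{n-1}=\omega^n,
$$
which completes the induction. The only real technical point is the accumulation-free fiber claim, which is essentially a repetition of the Lemma \ref{LemmaO1} argument from Theorem \ref{TheoremO2}; the remaining ingredient is the routine ordinal identity $\omega\cdot\omega^{n-1}=\omega^{1+(n-1)}=\omega^n$.
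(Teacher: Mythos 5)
Your proof is correct and follows essentially the same route as the paper's: induction on $n$ via the projection $\pi$, the accumulation-point-free fibers forcing each fiber to have ordinal type $\le\omega$, and the ordinal bound $\omega\cdot\omega^{n-1}=\omega^n$. The only cosmetic difference is that you re-run the Lemma \ref{LemmaO1} argument on each fiber separately, whereas the paper simply invokes Theorem \ref{TheoremO2} to conclude that $S$ itself, and hence every fiber, has no accumulation points.
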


\begin{proof} We prove the theorem by induction on $n$.

We first establish the theorem for $n=1$. Suppose that  $S\subset\RR$ has ordinal type $>\omega$.
To the ordinal number $\omega$ there corresponds an element $a$ of $S$. There are then infinitely
many elements of $S$ in the closed interval $[0,a]$, so that $S$ has an accumulation point. By
Theorem \ref{TheoremO2}, this is impossible.

Now suppose that $S\subset \RR^n$ and the theorem is true for subsemigroups of $\RR^{n-1}$. By
Theorem \ref{TheoremO2}, $S$ has no accumulation points.

Let $\pi:\RR^n\rightarrow \RR^{n-1}$ be projection onto the first $n-1$ factors. The set $\pi(S)$
is a well ordered semigroup, which  has ordinal type $\le \omega^h$ and is contained in the
nonnegative part of $\RR^{n-1}$.  By the induction assumption, $\pi(S)$ has ordinal type $\le
\omega^{n-1}$. For $\overline x\in\pi(X)$, $\pi^{-1}(\{\overline x\})\cap S$ is a well ordered
subset of $\RR$ which has no accumulation points, and thus  has ordinal type $\le \omega$. Since
this is true for all $\overline x\in \pi(S)$, the ordinal type of $S$ is $\le
\omega\omega^{n-1}=\omega^n$.
\end{proof}

\section{Subsemigroups of $\QQ_+$}\label{ratrk1}
Let $S$ be a well ordered subsemigroup of $\QQ_+$. Let $(\gamma_i)_{i\in I}$ be its minimal system of generators. The set of the $\gamma_i$ may or may not be of ordinal type $\omega$.\par
For example let us choose two prime numbers $p,q$ and consider the positive rational numbers $\gamma_i=1-\frac{1}{p^i}$ for $1\leq i<\omega$ and $\gamma_i=2-\frac{1}{q^{i-\omega+1}}$ for $\omega \leq i<2\omega$. These numbers form a well ordered subset of $\QQ_+$ of ordinal type $2\omega$ and generate a certain semigroup $S_{p,q}$ which in turn is well ordered by a result of B.H. Neumann (see \cite{Ne} and \cite{Ri}, Theorem 3.4). Because of the way their denominators grow with $i$, the $\gamma_i$ are a minimal system of generators of $S_{p,q}$. Using the first $k$ prime numbers one can build in the same way well ordered semigroups in $\QQ_+$ with minimal systems of generators of ordinal type $k\omega$ for any $k<\omega$. \par\medskip\noindent
\textbf{Remark}:
Since we assume $S\subset \QQ_+$, if it is a semigroup of values for some valuation, that valuation is of rank one and by the result quoted in the Introduction, if the semigroup $S$ comes from a noetherian ring it is of ordinal type $\leq \omega$.  \par\noindent
 The semigroup $S_{p,q}$ is therefore an example of a well ordered subsemigroup of a totally ordered group of finite rank which cannot be realized as the semigroup of values of a noetherian ring. Note that by Corollary \ref{CorollaryO3} the ordinal type of $S_{p,q}$ is $\geq \omega^\omega$. One can ask whether equality holds.
\par\medskip\noindent
We shall from now on in this section consider only semigroups $S=\langle \gamma_1,\ldots ,\gamma_i,\ldots\rangle\subset \QQ_+$ whose minimal system of generators is of ordinal type $\leq\omega$.  If the $\gamma_i$ have a common denominator, $S$ is isomorphic to a subsemigroup of $\NN$, and finitely generated (for a short proof see \cite{Re}, Th. 83 p. 203).\par Let us therefore assume that there is no common denominator. Let us denote by $S_i$ the semigroup generated by $\gamma_1,\ldots ,\gamma_i$ and by $G_i$ the subgroup of $\QQ$ which it generates. We have $S=\bigcup_{i=1}^\infty S_i$. Set also $n_i=[G_i\colon G_{i-1}]$ for $i\geq 2$. It is convenient to set $n_1=1$. The products $\Pi_{i=1}^kn_i$ tend to infinity with $k$.\par
By definition we have $n_i\gamma_i\in G_{i-1}$ and the image of $\gamma_i$ is an element of order $n_i$ in $G_i/G_{i-1}$. For each $i\geq 1$ let $r_i$ be the positive rational number such that $r_i\gamma_1,r_i\gamma_2,\ldots , r_i\gamma_i$ generate the group $\ZZ$. Since the semigroup $r_{i-1}S_{i-1}$ generates $\ZZ$ as a group, it has a finite complement (see \cite{Re}, Th. 82) and some positive multiple of $r_{i-1}n_i\gamma_i$ is contained in it. Thus, we know that there exists a smallest integer $s_i$ such that $s_i\gamma_i\in S_{i-1}$ and that it is an integral multiple of $n_i$.\par\medskip\noindent
\begin{definition} Let $S$ be a well ordered subsemigroup of the semigroup of positive elements of a totally ordered abelian group of finite rank. Let $\gamma_1,\ldots ,\gamma_i,\ldots$ be a minimal system of generators of $S$ indexed by some ordinal $\alpha$ and for each ordinal $\beta<\alpha$ define $S_\beta$ to be the semigroup generated by the $(\gamma_i)_{i\leq \beta}$. Given an integer $d$ we say that $S$ has stable asymptotic embedding dimension $\leq d$ if for each $\beta<\alpha$ the semigroup $S_\beta$ is isomorphic to the semigroup of values which a valuation takes on an equicharacteristic noetherian local domain of embedding dimension $\leq d$. We say that $S$ has stable embedding dimension $\leq d$ if it is the semigroup of values of a valuation on an equicharacteristic noetherian local domain of embedding dimension $\leq d$.
\end{definition}\par\noindent
\begin{proposition}\label{planesemigr} Let $S=\langle \gamma_1,\ldots ,\gamma_i,\dots\rangle$ be a well ordered subsemigroup of $\QQ_+$ which is not isomorphic to $\NN$ and whose minimal system of generators is of ordinal type $\leq\omega$. The following are equivalent:\par\noindent
1) For each $i\geq 2$ we have $s_i=n_i$ and $\gamma_i>s_{i-1}\gamma_{i-1}$.\par\noindent
2) The stable asymptotic embedding dimension of $S$ is two.\par\noindent
3) The stable embedding dimension of $S$ is two.
\end{proposition}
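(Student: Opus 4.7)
The strategy is to reduce the equivalences to the classical Ap\'ery-Zariski-Teissier characterization of plane branch semigroups: a finitely generated subsemigroup $\langle \delta_1,\ldots,\delta_g\rangle\subset \NN$ with coprime generators is the value semigroup of the order valuation of an irreducible plane curve singularity if and only if $s_i=n_i$ for $2\le i\le g$ and $\delta_{i+1}>s_i\delta_i$ for $1\le i\le g-1$ (see \cite{Z}, \cite{Te}). Since the integers $n_j$ and $s_j$ are invariant under scaling of the generators by a common positive rational, condition (1) applied to any finitely generated $S_\beta\subset\QQ_+$ is equivalent, after rescaling by $r_\beta$ so that $r_\beta S_\beta\subset\NN$ has coprime generators, to the Ap\'ery-Zariski-Teissier conditions for $r_\beta S_\beta$.

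Granting this dictionary, the equivalence $(1)\Leftrightarrow(2)$ becomes essentially immediate. For $(1)\Rightarrow(2)$, each finitely generated $S_\beta$ is realized by a plane branch, i.e.\ a noetherian local domain of embedding dimension $\le 2$ (and equal to $2$ for some $\beta$, since $S\not\cong\NN$). Conversely, if $S_\beta$ is realized by a valuation on an equicharacteristic noetherian local domain $R_\beta$ of embedding dimension $\le 2$, then after completion $\widehat{R}_\beta$ is either a DVR or a quotient of $k[[x,y]]$, and the structure theory of rank one valuations on two-dimensional regular local rings and plane curve singularities forces $S_\beta$ to satisfy the Ap\'ery-Zariski-Teissier conditions. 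The implication $(3)\Rightarrow(2)$ follows similarly: the same structure theory, applied to the pair $(R,\nu)$ realizing $S$, shows that each finitely generated truncation $S_\beta$ satisfies (1), hence is realized by its own plane curve.

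The main content, and the main obstacle, is $(1)\Rightarrow(3)$: given $S$ satisfying (1), construct a valuation on $k[[x,y]]$ with value semigroup exactly $S$. The plan is to build inductively elements $f_i\in k[[x,y]]$ with $\nu(f_i)=\gamma_i$. Set $f_1=x$ and $f_2=y$ with $\nu(x)=\gamma_1$, $\nu(y)=\gamma_2$. For $i\ge 2$, since $s_i=n_i$ we may write $n_i\gamma_i=\sum_{j<i}a^{(i)}_j\gamma_j$ in $S_{i-1}$; choose $c_i\in k^\times$ suitably and define
$$
f_{i+1}=f_i^{n_i}-c_i\prod_{j<i}f_j^{a^{(i)}_j}.
$$
The strict inequality $\gamma_{i+1}>s_i\gamma_i=n_i\gamma_i$ guarantees that $c_i$ can be chosen so that $\nu(f_{i+1})=\gamma_{i+1}$.

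The delicate part is to assemble this inductive recipe into an honest rank one valuation on all of $k[[x,y]]$, not merely on the polynomial subring generated by the $f_i$. One first shows, using the inequalities $\gamma_{i+1}>n_i\gamma_i$, that the $(x,y)$-adic orders of $f_i$ tend to infinity, so that every element of $k[[x,y]]$ admits an $f$-adic expansion convergent to any prescribed $(x,y)$-adic order. Defining $\nu$ on such an expansion as the infimum of the values of its monomials $\prod f_j^{b_j}$, one then verifies that this infimum is always attained, is independent of the chosen expansion, and cannot be lowered by cancellation, yielding a well-defined rank one valuation on $k[[x,y]]$ whose value semigroup is precisely $\langle\gamma_1,\gamma_2,\ldots\rangle=S$; this realizes stable embedding dimension $2$.
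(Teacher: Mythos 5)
Your outline follows essentially the same architecture as the paper's proof: reduce $1)\Leftrightarrow 2)$ to the classical characterization of value semigroups of plane branches (Zariski/Apéry/Teissier conditions), and handle $1)\Rightarrow 3)$ by constructing a valuation of $K[[x,y]]$ via a sequence of key-type polynomials $f_{i+1}=f_i^{n_i}-c_i\prod_{j<i}f_j^{a^{(i)}_j}$. The main difference is one of detail rather than substance: the paper invokes the theory of sequences of key polynomials from Favre--Jonsson (Chapter 2, Definition 2.1 and Theorem 2.28) as a black box, whereas you propose to re-derive that theory by hand. That re-derivation is precisely where your sketch is thin. The phrase ``one then verifies that this infimum is always attained, is independent of the chosen expansion, and cannot be lowered by cancellation'' is not a proof step; it is the \emph{entire content} of the SKP machinery, and it is not obvious---it rests on the fact that, at each stage, the graded pieces of $\mathrm{gr}_\nu$ generated by the images of $f_1,\ldots,f_i$ have a binomial-ideal structure that prevents unexpected cancellation, which is what forces any element with larger value to involve a new key polynomial. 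As written, that step would need to be fleshed out to the level of a Newton-polygon / MacLane-type argument, at which point you would have rewritten the cited reference. Unless you actually carry that out, you should just cite it as the paper does.

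A second, smaller gap is in $3)\Rightarrow 2)$ (and implicitly $3)\Rightarrow 1)$). You appeal to ``the structure theory of rank one valuations on two-dimensional regular local rings and plane curve singularities,'' but the paper's argument needs two specific ingredients you do not name: first, that for a rank one valuation the value semigroup is unchanged by passing to the $m$-adic completion (Teissier \S5), which is what lets you replace an arbitrary equicharacteristic noetherian local domain of embedding dimension $\le 2$ by a plane branch or by $K[[x,y]]$; and second, Favre--Jonsson Theorem 2.29, which produces an SKP \emph{from} a given valuation of $K[[x,y]]$ and thereby forces the numerical conditions of 1) on its semigroup. Your sketch also subtly reverses the logic of the SKP construction in one place: the condition $\gamma_{i+1}>n_i\gamma_i$ is a \emph{compatibility condition} that makes it consistent to declare $\nu(f_{i+1})=\gamma_{i+1}$, not a constraint that determines $c_i$; the constant $c_i$ is pinned down (up to higher-order adjustments) by the requirement that the two terms of $f_{i+1}$ cancel to leading order in the one-dimensional graded piece of value $n_i\gamma_i$. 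None of these issues is a wrong turn---your plan is in the right direction---but as a proof it is incomplete precisely at the points where the paper delegates to the literature.
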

\begin{proof} The conditions of 1) are known to be equivalent to the fact that each semigroup $S_i$ is the semigroup of values of the natural valuation of a plane branch, which is of embedding dimension 2 since $S\neq \NN$ (see \cite{Z}, Appendix, for characteristic zero, and \cite{C} for what is necessary to extend to positive characteristic). This shows that 1) is equivalent to 2). \par
Given a sequence of $\gamma_i$ satisfying 1), we can associate to it a sequence of key polynomials (SKP) as in (\cite{FJ}, Chapter 2, Definition 2.1) over any algebraically closed field $K$. That is, a sequence $P_0=x, P_1=y,\ldots ,P_i,\ldots$ of polynomials in $K[x,y]$ such that the conditions $\nu(P_i)=\gamma_i$ for all $i$ determine a unique valuation $\nu$ of the regular local ring $K[x,y]_{(x,y)}$ or, if the sequence of $\gamma_i$ is finite, of a one-dimensional quotient $K[x,y]_{(x,y)}/(Q(x,y))$, which is of embedding dimension 2 since $S\neq \NN$. The semigroup of values of $\nu$ is the semigroup generated by the $\gamma_i$ (see \textit{loc.cit.}, Theorem 2.28). So we see that 1) implies 3). Finally, if the semigroup $S$ comes from a noetherian local ring of embedding dimension 2, since for valuations of rank one the semigroup does not change under $m$-adic completion (see \cite{Te}, \S 5), we may assune that this ring corresponds either to a branch or to a two-dimensional complete equicharacteristic regular local local ring. If $S$ is the semigroup of values of a plane branch,  condition 1) is satisfied, as we have seen above and if it comes from a valuation $\nu$ of $K[[x,y]]$, by (\cite{FJ}, Theorem 2.29), there exists a SKP associated to $\nu$, and again condition 1) is satisfied. One should note that \cite{FJ} is written over the complex numbers, but the results of Chapter 2 are valid in any characteristic.\end{proof}\par\medskip\noindent
\textbf{Remark}: One may ask whether a subsemigroup of $\QQ_+$ with a minimal system of generators of ordinal type $\omega$ is always of bounded stable asymptotic embedding dimension or of bounded stable embedding dimension.
\section{The semigroups of noetherian rings are not always rationally finitely generated}
Using the results of \cite{FJ}, Chapter 2, one can check that for the semigroup $S=\langle \gamma_1,\ldots, \gamma_i,\ldots\rangle$ of a valuation of the ring $K[x,y]_{(x,y)}$ there always exist a finite set of generators $\gamma_1,\ldots \gamma_\ell $ which rationally generate the semigroup $S$ in the sense that for any generator $\gamma_j$ there is a positive integer $s_j$ such that $s_j\gamma_j\in \langle \gamma_1,\ldots, \gamma_\ell\rangle$. This fails for polynomial rings of dimension $\geq 3$, as is shown by the following example taken from \cite{Te}.\par\noindent 
Let us give
$\ZZ^2$ the lexicographic order and consider the field $K((t^{\ZZ^2_{\rm lex}}))$ endowed with the $t$-adic valuation with values in $\ZZ^2_{\rm lex}$. Let us denote by
$K[[t^{\ZZ^2_+}]]$ the corresponding valuation ring. Choose a 
sequence
of pairs of positive integers $(a_i,b_i)_{i\geq 3}$ and a sequence of elements $(\lambda_i\in K^*)_{i\geq 3}$ such that $b_{i+1}>b_i$, the series
$\sum_{i\geq 3}\lambda_i u_2^{b_i}$ is not algebraic over $K[u_2]$, and the ratios
$\frac{a_{i+1}-a_i}{b_{i+1}}$ are positive and increases strictly with $i$. Let $R_0$ be the
$K$-subalgebra of $K[[t^{\ZZ^2_+}]]$ generated by
$$u_1=t^{(0,1)}, u_2=t^{(1,0)},u_3=\sum_{i\geq 3}\lambda_i u_1^{-a_i}u_2^{b_i}.$$
There
cannot be an algebraic relation between $u_1,u_2,$ and $u_3$, so the ring
$R_0=K[u_1,u_2,u_3]$ is the polynomial ring in three variables. It inherits the $t$-adic 
valuation of
$K[[t^{\ZZ^2_+}]]$. One checks that this valuation extends to the localization
$R=K[u_1,u_2,u_3]_{(u_1,u_2,u_3)}$; it is a rational valuation of height two and rational rank
two.  Let us compute the semigroup $S$ of the values that it takes on $R$. We have
$\gamma_1=(0,1),\ \gamma_2=(1,0),\gamma_3=(b_3,-a_3)\in S$. Set
$S_3=\langle \gamma_1,\gamma_2,\gamma_3\rangle$. Then we have
$u_1^{a_3}u_3-\lambda_3 u_2^{b_3}=\sum_{i\geq 4}\lambda_i u_1^{a_3-a_i}u_2^{b_i}\in R$,  so that
$\gamma_4=(b_4,a_3-a_4)$ is in $S$. It is easy to deduce from our assumptions
that  {\it no
multiple of $\gamma_4$ is in $S_3$}, and that it is the smallest element of $S$
which is not in $S_3$. We set $u_4=u_1^{a_3}u_3-\lambda_3 u_2^{b_3}$, and continue in
the same manner: $u_1^{a_4-a_3}u_4-\lambda_4 u_2^{b_4}=u_5$, ...,
$u_1^{a_i-a_{i-1}}u_i-\lambda_i u_2^{b_i}=u_{i+1}$,... with
the generators $\gamma_i=\nu (u_i)=(b_i,a_{i-1}-a_i)$ for $i\geq 4$. Finally we have:
$$S=\langle \gamma_1,\gamma_2,\ldots ,\gamma_i ,\ldots\rangle ,$$ the initial forms of the $u_i$ constitute a minimal system of generators of
the graded $K$-algebra
$\hbox{\rm gr}_\nu R$, and the equations (setting $a_2=0$) $$u_1^{a_i-a_{i-1}}u_i-\lambda_i u_2^{b_i}=u_{i+1},\ \ i\geq 3$$
above  describe $\hat R^{(\nu )}$. In fact they even describe $R$; it is clear that from
them we can reconstruct the value of $u_3$ as a function of $u_1,u_2$ by (infinite)
elimination. The binomial equations for $\hbox{\rm gr}_\nu R$ are the 
$$U_1^{a_i-a_{i-1}}U_i-\lambda_i U_2^{b_i}=0,\ \ i\geq 3,$$ 
showing that all
the $U_i$ for $i\geq 3$ are  rationally dependent on $U_1,U_2$. From our assumption on the growth of the ratios we see moreover that {\it no multiple of}
$\gamma_i$ is in $S_{i-1}=\langle \gamma_1,\ldots ,\gamma_{i-1}\rangle$. In fact
$\gamma_i$ is outside of the cone with vertex $0$ generated by
$S_{i-1}$ in ${\mathbf R}^2$.\par\noindent

\section{A criterion for a series $\underline z$ to be transcendental}
In this section, we will use the following notation.
Let $K$ be a field, and $\Gamma$ a totally ordered abelian group.
Let $K((t^{\Gamma}))$ be the field of formal power series with
a well ordered set of exponents
in $\Gamma$, and coefficients in $K$. Let $\nu$ be the $t$-adic valuation of $K((t^{\Gamma}))$.
Suppose that $R\subset K((t^{\Gamma}))$ is
a local ring which is essentially of finite type over $K$ (a localization of
a finitely generated $K$-algebra). and that  $\nu$ dominates $R$.

Let $d=\mbox{dim}(R)$. Write $R=A_P$ where $A\subset R_{\nu}$ is of finite type
over $K$, and $R=A_P$ where $P$ is the center of $\nu$ on $A$.

By Noether's normalization theorem (Theorem 24, Section 7, Chapter VIII \cite{ZS}), there exist $x_1,\ldots,x_d\in A$, which are algebraically independent over $K$, such that $A$ is a finite module over the polynomial ring
$B=K[x_1,\ldots,x_d]$.  Thus there exist $b_1,\ldots,b_r\in A$ for some finite $r$,
such that $A=Bb_1+\cdots+Bb_r$.

Let $A[z]$ be a polynomial ring over $A$. For $n\in \NN$, define a finite dimensional $K$ vector space $D_n$ by
$$
D_n=\{f\in A[z]\mid f=f_1b_1+\cdots+f_rb_r\mbox{ where }f_1,\ldots,f_r\in K[x_1,\ldots,x_d,z]\mbox{ have total degree $\le n$}\}.
$$

\begin{Lemma}\label{Lemma0} Suppose that $w\in K((t^{\Gamma}))$ has positive value and $n\in \NN$. Then the set of values
$$
E_n=\{\nu(f(w))\mid f\in D_n\mbox{ and }f(w)\ne 0\}
$$
is finite.
\end{Lemma}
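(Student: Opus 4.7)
The plan is to recognize $D_n$ as a finite-dimensional $K$-vector space, turn the evaluation $f\mapsto f(w)$ into a $K$-linear map into $K((t^{\Gamma}))$, and then apply the general fact that any finite-dimensional $K$-subspace of $K((t^{\Gamma}))$ realizes only finitely many $\nu$-values. The finiteness of $E_n$ then follows.

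First I would observe that $D_n$ is spanned over $K$ by the finitely many products $m(x_1,\ldots,x_d,z)\,b_i$, where $m$ ranges over monomials of total degree at most $n$ in $d+1$ formal variables and $1\le i\le r$. Hence $\dim_K D_n<\infty$. Since $x_1,\ldots,x_d,b_1,\ldots,b_r$ and $w$ are fixed elements of $K((t^{\Gamma}))$, and the $f_i$ have their coefficients in $K$, the evaluation map $\phi\colon D_n\to K((t^{\Gamma}))$, $f\mapsto f(w)$, is $K$-linear. Consequently $V:=\phi(D_n)$ is a finite-dimensional $K$-subspace of $K((t^{\Gamma}))$, and $E_n=\nu(V\setminus\{0\})$.

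The substance of the lemma is therefore the following general fact: for any finite-dimensional $K$-subspace $V\subset K((t^{\Gamma}))$ one has $\bigl|\nu(V\setminus\{0\})\bigr|\le\dim_K V$. To prove it, I would take any finite set $\gamma_1<\cdots<\gamma_k$ of distinct values in $\nu(V\setminus\{0\})$ and choose $v_i\in V$ with $\nu(v_i)=\gamma_i$. A putative nontrivial $K$-relation $\sum c_iv_i=0$, upon isolating the term of smallest index $j$ for which $c_j\ne 0$, would give $\nu(c_jv_j)=\gamma_j$ on one side and $\nu\bigl(\sum_{i>j}c_iv_i\bigr)\ge\gamma_{j+1}>\gamma_j$ on the other, contradicting the ultrametric inequality. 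So $v_1,\ldots,v_k$ are $K$-linearly independent and $k\le\dim_K V$, giving the claimed bound.

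There is no real obstacle; the whole argument is a standard application of the \emph{distinct values force linear independence} principle. The only mild verification is that the evaluation map $\phi$ really is $K$-linear, which holds because everything outside the indeterminate $z$ is fixed in $K((t^{\Gamma}))$ and the variable coefficients live in $K$.
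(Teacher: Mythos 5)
Your proof is correct and uses essentially the same idea as the paper's: finite-dimensionality of $D_n$ over $K$ together with the ultrametric inequality for $\nu$. The paper packages this as the descending chain of $K$-subspaces $C_\tau=\{f\in D_n\mid\nu(f(w))\ge\tau\}$ of $D_n$, which must have only finitely many distinct terms, whereas you pass to the image $\phi(D_n)\subset K((t^{\Gamma}))$ and observe that elements of pairwise distinct value are $K$-linearly independent; these are two equivalent ways of reading the same dimension count.
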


\begin{proof} For $\tau\in\Gamma_+$, let
$$
C_{\tau}=\{f\in D_{n}\mid \nu(f(w))\ge\tau\}.
$$
$C_{\tau}$ is a $K$ subspace of $D_n$. Since $C_{\tau_1}\subset C_{\tau_2}$ if $\tau_2\le\tau_1$, $E_n$ must be a finite set.
\end{proof}

\begin{Lemma}\label{Lemma1}
Suppose that $w\in K((t^{\Gamma}))$ has positive value. Let
$$
\tau=\mbox{max}\{\nu(f(w))\mid f\in D_n\mbox{ and }f(w)\ne 0\}.
$$
Choose $\lambda\in\Gamma$ such that $\lambda>\tau$ and $h\in K((t^{\Gamma}))$ such that $\nu(h)=\lambda$.

Suppose that $0\ne f\in D_n$. Then $f(w+h)\ne 0$.
\end{Lemma}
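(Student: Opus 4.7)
The plan is to expand $f(w+h)$ via Taylor's formula phrased with Hasse derivatives (so as to work in arbitrary characteristic) and to show the resulting finite sum has pairwise distinct $\nu$-valuations among its nonzero summands; this will force $f(w+h)\neq 0$, since the sum then has a unique term of minimum valuation.

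I would write $f=\sum_{j=0}^{m} c_j z^j$ with $c_j\in A$ and $c_m\neq 0$, define the $k$-th Hasse derivative $f^{[k]}(z)=\sum_{j\ge k}\binom{j}{k}c_j z^{j-k}\in A[z]$, and use the identity $f(w+h)=\sum_{k=0}^{m} f^{[k]}(w)\,h^k$. The first thing to check is that $f^{[k]}\in D_n$ whenever $f\in D_n$: in the decomposition $f=f_1 b_1+\cdots+f_r b_r$ with $f_i\in K[x_1,\ldots,x_d,z]$ of total degree $\le n$, the $z$-Hasse derivative acts componentwise and cannot raise the total degree, so $f^{[k]}=f_1^{[k]} b_1+\cdots+f_r^{[k]}b_r$ still lies in $D_n$. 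Next, since $A\subset R_\nu$ and $\nu(w)>0$, every monomial appearing in $f^{[k]}(w)$ has nonnegative valuation, so $\nu(f^{[k]}(w))\ge 0$ whenever this element is nonzero; combined with the defining maximality of $\tau$ applied to $f^{[k]}\in D_n$, this yields the two-sided bound $0\le \nu(f^{[k]}(w))\le \tau$ for every index $k$ contributing a nonzero term. At least one such index exists, because $f^{[m]}(w)=c_m$ is a nonzero element of $A$ inside the domain $K((t^{\Gamma}))$.

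The conclusion then hinges on pairwise distinctness of the values $\nu(f^{[k]}(w)h^k)=\nu(f^{[k]}(w))+k\lambda$ over contributing indices $k$. If two indices $k_1<k_2$ produced the same value, one would get
$$
(k_2-k_1)\lambda=\nu(f^{[k_1]}(w))-\nu(f^{[k_2]}(w))\in[-\tau,\tau],
$$
whereas $(k_2-k_1)\lambda\ge\lambda>\tau$, a contradiction. Hence the nonzero terms in $\sum_k f^{[k]}(w)h^k$ have pairwise distinct $\nu$-values, the sum has a unique term of minimum valuation, and $f(w+h)\neq 0$. The one point requiring care is working in arbitrary characteristic: ordinary derivatives would introduce inverses of factorials, which is why the argument must be phrased with Hasse derivatives; once that is set up, everything is driven by the single inequality $\lambda>\tau$.
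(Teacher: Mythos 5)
Your proof is correct and follows essentially the same route as the paper: expand $f(w+h)$ as $\sum_k d_k(w)h^k$ with $d_k\in D_n$, then show the nonzero terms have pairwise distinct valuations because any coincidence would force $(k_2-k_1)\lambda\le\tau<\lambda$. The only cosmetic difference is that you name the $d_k$ as Hasse derivatives $f^{[k]}$ and explicitly record the lower bound $\nu(f^{[k]}(w))\ge 0$ (from $A\subset R_\nu$ and $\nu(w)>0$), both of which the paper leaves implicit; your write-up is thus a slightly more detailed version of the same argument.
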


\begin{proof} Suppose that $0\ne f\in D_n$.
Let $m=\mbox{deg}_z(f)$. We have  $0<m\le n$ (the case $m=0$ is trivial).
Write
$$
f=a_mz^m+a_{m-1}z^{m-1}+\cdots+a_0
$$
where $a_m\ne 0$ and each $a_i$ has an expression
$$
a_i=c_{i1}b_1+\cdots+c_{ir}b_r
$$
where $c_{ij}\in K[x_1,\ldots,x_d]$ is a polynomial of degree $\le n$
for all $i,j$. Subsituting $w+h$ for $z$, we have
$$
f(w+h)=h^md_m(w)+h^{m-1}d_{m-1}(w)+\cdots +d_0(w)
$$
where $d_i(z)\in D_n$ for all $i$ and $d_m(z)=a_m$,
 so that $h^md_m(w)\ne 0$.

Suppose that $i<j$, $h^id_i(w)\ne 0$, $h^jd_j(w)\ne 0$ and
$\nu(h^id_i(w))=\nu(h^jd_j(w))$. Then
$$
i\lambda+\nu(d_i(w))=j\lambda+\nu(d_j(w)),
$$
which yields
$$
(j-i)\lambda=\nu(d_i(w))-\nu(d_j(w)).
$$
But
$$
\nu(d_i(w))-\nu(d_j(w))\le \tau<\lambda\le (j-i)\lambda,
$$
a contradiction. Thus all nonzero terms $h^id_i(w)$ of $f(w+h)$ have distinct values.
Since at least one of these terms was shown to be non zero, $f(w+h)$
has finite value, so $f(w+h)\ne 0$.
\end{proof}

\begin{Theorem}\label{Theorem1}
Suppose that $K$ is a field, $\Gamma$ is a totally ordered abelian group,
$R\subset K((t^{\Gamma}))$ is a local ring which is essentially of finite type over $K$ and which is dominated by the $t$-adic valuation $\nu$ of $K((t^{\Gamma}))$.

Suppose that $z_i\in K((t^{\Gamma}))$ are defined as follows:

Let
\begin{equation}\label{eq4}
\tau_1=\mbox{max}\{\nu(f)\mid f\in D_1\mbox{ and }f(0)\ne 0\}.
\end{equation}
Choose $\alpha_1\in\Gamma_+$   with $\alpha_1>\tau_1$ and $h_1\in K((t^{\Gamma}))$ such that $\nu(h_1)=\alpha_1$.
Set $z_1=h_1$.

Inductively define $\alpha_i\in \Gamma_+$, $h_i\in K((t^{\Gamma}))$ and $z_i=z_{i-1}+h_i$ with $\nu(h_i)=\alpha_i$ for $2\le i$ so that if

\begin{equation}\label{eq3}
\tau_{i}=\mbox{max}\{\nu(f(z_{i-1}))\mid f\in D_i
\mbox{ and }f(z_{i-1})\ne 0\},
\end{equation}
then
$$
\alpha_{i}>\tau_{i}.
$$

Then
$\underline z=\lim_{i\mapsto \infty}z_i\in K((t^{\Gamma}))$ is transcendental over the quotient field $L$ of $R$.
\end{Theorem}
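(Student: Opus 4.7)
The plan is to argue by contradiction: assume $\underline z$ is algebraic over $L$ and use Lemma~\ref{Lemma1} to force a nonzero value at $\underline z$ for the supposed annihilating polynomial. The first reduction is to place such an annihilator in one of the spaces $D_n$. Since $L=\mathrm{Frac}(A)$, algebraicity of $\underline z$ over $L$ yields a nonzero $f\in A[z]$ with $f(\underline z)=0$. Using the presentation $A=Bb_1+\cdots+Bb_r$, I would write $f=\sum f_k b_k$ with $f_k\in K[x_1,\ldots,x_d,z]$ and let $n$ be the largest total degree appearing among the $f_k$; this puts $f$ in $D_n$.

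The core observation is that once $i\ge n$ we have $f\in D_n\subset D_i$, and so
\[\max\{\nu(g(z_{i-1}))\mid g\in D_n,\ g(z_{i-1})\ne 0\}\ \le\ \tau_i\ <\ \alpha_i=\nu(h_i).\]
Lemma~\ref{Lemma1} applied to $f$ with $w=z_{i-1}$ and $h=h_i$ therefore yields $f(z_i)\ne 0$ for every $i\ge n$. Revisiting the Taylor expansion in the proof of Lemma~\ref{Lemma1}, once $f(z_{i-1})\ne 0$ the $k=0$ term $f(z_{i-1})$ has strictly smaller $\nu$-value than every higher-$k$ term (because these carry extra factors of $\alpha_i>\tau_i$), so the values $\nu(f(z_i))$ stabilize as $i$ grows.

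To close the argument I would apply Lemma~\ref{Lemma1} one last time with $w=z_i$ (for any $i\ge n$) and $h=\underline z-z_i=\sum_{j>i}h_j$. Provided $\nu(\underline z-z_i)\ge \alpha_{i+1}$, the bound $\nu(h)>\tau_{i+1}$ combined with $D_n\subset D_{i+1}$ gives $\nu(h)>\max\{\nu(g(z_i))\mid g\in D_n,\ g(z_i)\ne 0\}$, which is exactly the hypothesis of Lemma~\ref{Lemma1}. The conclusion is $f(\underline z)=f(z_i+(\underline z-z_i))\ne 0$, contradicting $f(\underline z)=0$.

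The principal technical point is the estimate $\nu(\underline z-z_i)\ge\alpha_{i+1}$: it requires the sequence $(\alpha_j)$ to be increasing (so that no later $h_j$ supplies a smaller value) and cofinal enough that the series defining $\underline z$ actually converges in $K((t^{\Gamma}))$. This monotonicity is built into the freedom of the inductive construction—each $\alpha_i$ may be taken arbitrarily large above $\tau_i$—and it is the single hinge on which the transcendence argument rests.
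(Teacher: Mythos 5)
Your argument is correct and takes essentially the same route as the paper: reduce the putative annihilator $f\in A[z]$ of $\underline z$ to some $D_n$, apply Lemma~\ref{Lemma1} with $w=z_{i-1}$ and $h=h_i$ (valid since $\max\{\nu(g(z_{i-1}))\mid g\in D_n,\ g(z_{i-1})\ne 0\}\le\tau_i<\alpha_i$) to get $f(z_i)\ne 0$, and then note that the tail $\underline z-z_i$ has value exceeding $\tau_{i+1}\ge \nu(f(z_i))$ and so cannot cancel it. The only cosmetic difference is that the paper finishes with the direct ultrametric expansion $f(\underline z)=f(z_n)+(\underline z-z_n)e$ with $\nu(e)\ge 0$ rather than a second invocation of Lemma~\ref{Lemma1}, and, like the paper, you take as given the convergence of $\underline z$ and the estimate $\nu(\underline z-z_n)=\alpha_{n+1}$.
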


\begin{proof}
Since $\{\alpha_i\}$ is an increasing sequence in $\Gamma$ with $\nu(h_j-h_i)=\alpha_{i+1}$ for $j>i$, we have that the limit $\underline z=\lim_{i\mapsto \infty}z_i$ exists in $K((t^{\Gamma}))$.

 Assume that $\underline z$ is not
transcendental over $L$. Then there exists a nonzero polynomial $g(z)\in A[z]$ such that
$g(\underline z)=0$. Let
$m=\mbox{deg}_z(g)\ge 1$.
Expand
$$
g(z)=a_mz^m+a_{m-1}z^{m-1}+\cdots+a_0
$$
with $a_i\in A$ for all $i$. Each $a_i$ has an expansion
$$
a_i=c_{i1}b_1+\cdots+c_{ir}b_r
$$
where $c_{ij}\in K[x_1,\ldots,x_d]$ for all $i,j$. Let $n\in\NN$ be such that $n\ge\mbox{deg}(c_{ij})$ for $1\le i\le m$ and $1\le j\le r$, and $n\ge m$.

By our construction, we have
$$
z_n=z_{n-1}+h_n\mbox{ with }\nu(h_n)=\alpha_n>\tau_n.
$$
 By Lemma \ref{Lemma1}, $g(z_n)\ne 0$. In $K((t^{\Gamma}))$, we compute
$$
g(\underline z)=g(z_n+\underline z-z_n)
=g(z_n)+(\underline z-z_n)e
$$
with $\nu(e)\ge 0$.
We have
$$
\nu(\underline z-z_n)= \nu(h_{n+1})=\alpha_{n+1}>
\tau_{n+1}\ge
\nu(g(z_n)).
$$
Thus $\nu(g(\underline z))\le \tau_{n+1}<\infty$, so that $g(\underline z)\ne 0$,
a contradiction.
\end{proof}

\begin{Corollary}
Suppose that $K$ is a field, $\Gamma$ is a totally ordered abelian group,
$R\subset K((t^{\Gamma}))$ is a local ring which is essentially of finite type over $K$ and which is dominated by the $t$-adic valuation $\nu$ of $K((t^{\Gamma}))$. Then there exists $\underline z\in K((t^{\Gamma}))$ such that
$\underline z$ is transcendental over the quotient field of $R$.
\end{Corollary}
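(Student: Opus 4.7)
The plan is to apply Theorem \ref{Theorem1} directly, the only real content being to verify that the inductive construction described there can actually be carried out under the hypotheses of the corollary. In other words, the task is just to check that at every stage a suitable $\alpha_i \in \Gamma_+$ and $h_i \in K((t^\Gamma))$ with $\nu(h_i)=\alpha_i$ exist.

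First I would observe that the hypothesis that $\nu$ dominates the local ring $R$ forces $\Gamma_+$ to be non-empty: if $x$ is any nonzero element of the maximal ideal of $R$, then $\nu(x)>0$, so $\Gamma$ is not the trivial group and in particular has no largest element (for any $\gamma\in \Gamma$ one has $\gamma+\nu(x)>\gamma$). Next I would set up the data of Theorem \ref{Theorem1}: with the notations $A$, $b_1,\ldots,b_r$, $x_1,\ldots,x_d$ and $D_n$ chosen as in the paragraphs preceding Lemma \ref{Lemma0}, the quantity $\tau_1$ of equation (\ref{eq4}) is defined as a maximum over the finite set $E_1$ furnished by Lemma \ref{Lemma0} applied to $w=0$ (the set is non-empty since $1\in D_1$ with $1(0)=1\ne 0$). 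Therefore $\tau_1 \in \Gamma$ and, since $\Gamma$ has no largest element, we can choose $\alpha_1\in\Gamma_+$ with $\alpha_1>\tau_1$; the element $h_1=t^{\alpha_1}\in K((t^\Gamma))$ has $\nu(h_1)=\alpha_1$, and we put $z_1=h_1$.

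For the inductive step, assume $z_{i-1}$ has been constructed. By Lemma \ref{Lemma0} applied to $w=z_{i-1}$ (which has positive value because $\nu(z_{i-1})=\alpha_1>0$), the set appearing in (\ref{eq3}) is finite and non-empty, so $\tau_i\in \Gamma$ is well defined. Again I choose $\alpha_i\in\Gamma_+$ with $\alpha_i>\tau_i$, set $h_i=t^{\alpha_i}$ and $z_i=z_{i-1}+h_i$. All hypotheses of Theorem \ref{Theorem1} are now met, so $\underline z=\lim_{i\to\infty}z_i$ exists in $K((t^\Gamma))$ and is transcendental over the quotient field of $R$.

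There is really no serious obstacle here: the whole transcendence mechanism is encapsulated in Theorem \ref{Theorem1}, and the only thing to notice is the elementary fact that a totally ordered abelian group that has even one positive element has no largest element, so the sequence of thresholds $\tau_i$ can always be exceeded by some $\alpha_i\in\Gamma_+$. The one place where a reader might want a sentence of justification is the well-definedness of the maxima $\tau_i$, which however is precisely what Lemma \ref{Lemma0} provides.
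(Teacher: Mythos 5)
Your proof is correct and takes essentially the same route as the paper, which states the corollary without proof as an immediate consequence of Theorem \ref{Theorem1}; your verification that each $\tau_i$ is a well-defined element of $\Gamma$ (via Lemma \ref{Lemma0}) and can always be exceeded by some $\alpha_i\in\Gamma_+$ (since $\Gamma_+\neq\emptyset$ forces $\Gamma$ to have no largest element) is exactly the content the paper leaves implicit. The only cosmetic remark is that Lemma \ref{Lemma0} is stated for $w$ of positive value and so does not literally cover the base case $w=0$, but its proof applies verbatim there.
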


\begin{Remark} The conclusions of the theorem may fail if $R$ is
Noetherian, but not essentially of finite type over $K$. A simple example is
$\Gamma=\ZZ$ and $R=K[[t]]$, since $K((t^{\Gamma}))$ is the quotient field of $R$.
\end{Remark}

\section{An example where all $\mbox{\rm gr}_{{\mathcal F}(\gamma_1)} {\mathcal P}_{\gamma_1}/{\mathcal P}_{\gamma_1}^+$ are not finitely generated $\mbox{\rm gr}_{\overline\nu}R/P$ modules}\label{z7}

Let $K$ be an algebraically closed field. Let $K(x,y)$ be a two dimensional
rational function field over $K$. Let $A=K[x,y]_{(x,y)}$.
In Example 3 of
Chapter VI, Section 18 of \cite{ZS} a construction is given of a valuation
$\overline \nu$ of $K(x,y)$ which dominates $A$, and such that the value group
of $\overline \nu$ is $\QQ$.
 There is an embedding $K(x,y)\subset K((t^{\QQ}))$ of $K$ algebras,
where $K((t^{\QQ}))$ is the formal power series field with a well ordered set of exponents in $\QQ$, and coefficients in $K$,
by Theorem 6 of \cite{Kap1}, since $K$ is algebraically closed and $\QQ$ is divisible.

Let $\QQ_+$ denote the positive rational numbers.

Let
\begin{equation}\label{eq31}
F_0=\{\overline\nu(f)\mid f\in A\text{ and }f\ne 0\}
\end{equation}
 be the semigroup of the valuation $\overline\nu$ on $A$.

We will use the criterion of Theorem \ref{Theorem1} to construct a limit
$\underline z =\lim_{i\rightarrow \infty}z_i$ in $K((t^{\QQ})$ which is
transcendental over the quotient field of $A$.

Let $z$ be a transcendental element over $K[x,y]$ and let
$$
D_i=\{f\in K[x,y,z]\mid \mbox{ the total degree of $f$ is $\le i$}\}.
$$

In our construction, we inductively define $z_i\in K((t^{\QQ}))$. Then for $n\in\NN$, we may
define $F_0=\overline\nu(A\setminus \{0\})$
modules $M_i^n$ by
$$
M_i^n=\{\nu(a_0+a_1z_i+\cdots+a_nz_i^n)\mid a_0,\ldots,a_n\in K[x,y]\}.
$$

Let
$$
A_i=A[z_i]_{(x,y,z_i)}.
$$
The local ring $A_i$ is dominated by $\overline\nu$, so the semigroup $\Gamma_i$
of $\overline\nu$ on $A_i$ is topologically discrete.  It follows that there are arbitrarily large elements of $\QQ_+$ which are not in $\Gamma_i$.

We first choose $\lambda_1\in\QQ_+$ such that $\lambda_1\not\in F_0$
and $\lambda_1>\tau_1$ where $\tau_1$ is defined by (\ref{eq4}).
Set $\alpha_1=\lambda_1$. Choose $f_1,g_1\in K[x,y]$ such that
$$
\overline\nu\left(\frac{f_1}{g_1}\right)=\alpha_1.
$$

We then inductively construct $\lambda_i\in\QQ_+$, $\alpha_i\in \QQ_+$,  $f_i,g_i\in K[x,y]$ and $\tau_i\in\QQ_+$ such that
$\lambda_{i}\not\in \Gamma_{i-1}$, $\tau_i$ is defined by (\ref{eq3}),
and
$$
\lambda_{i}>\mbox{max}\{\lambda_{i-1}+\overline\nu(g_1\cdots g_{i-1}),\tau_i+\overline\nu(g_1\cdots g_{i-1})\}.
$$
We define
$$
\alpha_{i}= \lambda_{i}-\overline\nu(g_1\cdots g_{i-1})
$$
and choose $f_i,g_i\in K[x,y]$ so that
$$
\overline\nu\left(\frac{f_i}{g_i}\right)=\alpha_i.
$$
Let
$$
z_i=z_{i-1}+\frac{f_i}{g_i}.
$$
The resulting series $\underline z=\lim_{i\rightarrow \infty}z_i$ is transcendental over $K(x,y)$
by Theorem \ref{Theorem1}, since
$\alpha_i>\tau_i$ for all $i>1$.

\begin{Lemma}\label{Lemma20}
With the above notation, for $n\in\NN$,
define a $F_0$ module
$$
T^n=\{\overline\nu(a_0+a_1\underline z+\cdots+a_n\underline z^n)\mid a_0,a_1,\cdots,a_n\in K[x,y]\}.
$$
Then for all $n>0$, $T_i^n$ is not  finitely generated as a $F_0$ module.

Let $B=A[\underline z]_{(x,y,\underline z)}$. $B\subset K((t^{\QQ}))$ is
a three dimensional local ring which is dominated by the $t$-adic valuation $\overline\nu$ of
$K((t^{\QQ}))$. Let
$T^{\infty}=\overline\nu(B\setminus\{0\})$ be the semigroup of the valuation $\overline\nu$ on $B$.
Then $T^{\infty}$ is not  a finitely generated $F_0$ module.
\end{Lemma}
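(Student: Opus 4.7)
The plan is to produce, for each $i\ge 1$, an explicit element in $T^n$ (and in $T^\infty$) with value exactly $\lambda_i$, and then to leverage the condition $\lambda_i \notin \Gamma_{i-1}$ built into the construction to contradict any putative finite generation via a stabilization argument. The main technical obstacle is showing that a finite list of candidate generators, which a priori involve the transcendental series $\underline z$, can be simultaneously approximated by substituting a sufficiently advanced partial sum $z_{m-1}$ without changing their values.

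\textbf{Step 1: producing $\lambda_i$ inside $T^n$ and $T^\infty$.} Set $G_0=1$ and $G_j=g_1 g_2\cdots g_j$ for $j\ge 1$. Since each $g_l$ divides $G_{i-1}$, one has $G_{i-1}\,z_{i-1}=\sum_{l=1}^{i-1}(G_{i-1}/g_l)\,f_l\in K[x,y]$, and therefore
$$
p_i \ :=\ G_{i-1}\,\underline z\ -\ G_{i-1}\,z_{i-1}\ \in\ K[x,y][\underline z]
$$
is a polynomial of degree $\le 1$ in $\underline z$ over $K[x,y]$, belonging both to the set of expressions whose values define $T^n$ and to the local ring $B$. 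The inequalities imposed on the $\lambda_i$ in the construction force $\alpha_1<\alpha_2<\cdots$ with $\alpha_m\to\infty$; consequently $\overline\nu(\underline z-z_{i-1})=\alpha_i$ and $\overline\nu(p_i)=\overline\nu(G_{i-1})+\alpha_i=\lambda_i$. Hence $\lambda_i\in T^1\subseteq T^n\subseteq T^\infty$ for every $i\ge 1$, while by construction $\lambda_i\notin\Gamma_{i-1}$.

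\textbf{Step 2: uniform stabilization of candidate generators.} Assume $T^n$ is generated as an $F_0$-module by values $v_1,\ldots,v_k$, with $v_l=\overline\nu(e_l)$ and $e_l=\sum_{j=0}^n a_j^{(l)}\underline z^j$, $a_j^{(l)}\in K[x,y]$. Writing $e_l-e_l|_{\underline z\mapsto z_{m-1}}=(\underline z-z_{m-1})\,R_l$, the factor $R_l$ is a polynomial in $\underline z$ and $z_{m-1}$ with coefficients in $K[x,y]$, so every monomial in $R_l$ has non-negative value and $\overline\nu(R_l)\ge 0$; thus $\overline\nu\bigl(e_l-e_l|_{\underline z\mapsto z_{m-1}}\bigr)\ge\alpha_m$. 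Choosing $N$ with $\alpha_N>\max_l v_l$ yields $\overline\nu(e_l|_{\underline z\mapsto z_{m-1}})=v_l$ for every $l$ and every $m\ge N$. Since $e_l|_{\underline z\mapsto z_{m-1}}\in K[x,y,z_{m-1}]\subseteq A_{m-1}$, this places each $v_l$ in $\Gamma_{m-1}$ for all $m\ge N$.

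\textbf{Step 3: contradiction, and the $T^\infty$ case.} Pick any $i>N$. Finite generation forces $\lambda_i=v_l+s$ for some $l$ and some $s\in F_0$. Since $F_0\subseteq\Gamma_{i-1}$ and, by Step 2, $v_l\in\Gamma_{i-1}$, the semigroup $\Gamma_{i-1}$ contains $v_l+s=\lambda_i$, contradicting Step 1. The argument for $T^\infty$ is parallel: any candidate generator has the form $v_l=\overline\nu(p_l/q_l)$ with $p_l,q_l\in K[x,y,\underline z]$ and $q_l(0,0,0)\ne 0$. The substitution $\underline z\mapsto z_{m-1}$ preserves the nonzero constant term of $q_l$, so $q_l|_{\underline z\mapsto z_{m-1}}$ remains a unit in $A_{m-1}$, and the same polynomial-difference estimate applied separately to $p_l$ and $q_l$ shows that $p_l/q_l|_{\underline z\mapsto z_{m-1}}\in A_{m-1}$ realises the value $v_l$ for $m$ large; hence $v_l\in\Gamma_{m-1}$, and the contradiction with $\lambda_i\notin\Gamma_{i-1}$ proceeds exactly as before.
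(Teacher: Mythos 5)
Your proof is correct and follows essentially the same strategy as the paper's: exhibit $\lambda_i=\overline\nu\left(g_1\cdots g_{i-1}\underline z-g_1\cdots g_{i-1}z_{i-1}\right)\in T^1\subseteq T^n$, then show that a finite generating set would force all of $T^n$ into the value semigroup $\Gamma_l$ of some finite-stage ring $A_l$, contradicting $\lambda_{l+1}\notin\Gamma_l$. Your direct substitution $\underline z\mapsto z_{m-1}$ into the finitely many generators plays exactly the role of the paper's auxiliary modules $M_i^n$ and its interval identities $T^n\cap[0,\alpha_i)=M_i^n\cap[0,\alpha_i)$.
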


\begin{proof}

Suppose that $n\ge 1$.
We will show that $T^n$ is not finitely generated as a $F_0$ module.

We compute
$$
\overline\nu(\underline z)=\overline\nu\left(\frac{f_1}{g_1}\right)=\lambda_1,
$$
and for $i\ge 2$,
$$
\begin{array}{l}
\overline\nu\left(g_1\cdots g_{i-1}\underline z - (f_1g_2\cdots g_{i-1}+f_2g_1g_3\cdots g_{i-1}+\cdots + f_{i-1}g_1\cdots g_{i-2})\right)\\
=\overline\nu\left(g_1\cdots g_{i-1}\frac{f_i}{g_i}\right)\\
=\overline\nu\left(\frac{f_i}{g_i}\right)+\overline\nu(g_1\cdots g_{i-1})\\
=\lambda_i.
\end{array}
$$
Thus $\lambda_i\in T^n$ for all $i$.

For $n\in\NN$,
define a $F_0$ module
$$
T^n=\{\overline\nu(a_0+a_1\underline z+\cdots+a_n\underline z^n)\mid a_0,a_1,\cdots,a_n\in K[x,y]\}.
$$

$M_i^n$ is a subset of $\QQ$ for all $i$. We compare the intersections of
the $M_i^n$ with various intervals $[0,\sigma)$ in $\QQ$.
Since $\overline\nu(z_1)=\alpha_1$, we have that
$$
M_1^n\cap [0,\alpha_1)=   F_0\cap [0,\alpha_1),
$$
and since
$$
z_i=z_{i-1}+\frac{f_i}{g_i}
$$
 with
$$
\overline\nu\left(\frac{f_i}{g_i}\right)=\alpha_i,
$$
we have that
$$
M_i^n\cap [0,\alpha_i)=M^n_{i-1}\cap [0,\alpha_i)
$$
for all $i\ge 2$.

We further have that
\begin{equation}\label{eq2}
T^n\cap [0,\alpha_i)=M^n_i\cap [0,\alpha_i)
\end{equation}
for $i\ge 1$.

Suppose that $n\ge 1$ and $T^n$ is a finitely generated $F_0$ module. We will derive
a contradiction. With this assumption, there exist $x_1,\ldots,x_m\in T^n$
such that every element $v\in T^n$ has an expression $v=y+x_j$ for some
$y\in F_0$, and for some $x_j$ with $1\le j\le m$.

There exists a positive integer $l$ such that
$\nu(x_j)<\alpha_l$ for $1\le j\le l$. Thus $x_1\ldots,x_m\in M_l^n$ by
(\ref{eq2}). It follows  that $T^n\subset M_l^n$ since $M_l^n$ is a $F_0$ module.
But $\lambda_{n+1}\not\in M_l^n$ by our construction, as $M_l^n\subset \Gamma_l$, which gives a contradiction,
since we have shown that $\lambda_{l+1}\in T^n$.

The proof that $T^{\infty}=\bigcup_{n=0}^{\infty} T^n$ is not a finitely generated
$F_0$ module is similar.
\end{proof}

\begin{Proposition}\label{Proposition3} Suppose that $K$ is an algebraically closed field, and $K(x,y,z)$ is a rational function field in 3 variables.
Then there exists a rank 2 valuation $\nu=\nu_1\circ\overline\nu$ of $K[x,y,z]$ with value group $\ZZ\times\QQ$ with the Lex order, which dominates the
regular local ring
$$
R=K[x,y,z]_{(x,y,z)},
$$
with $R/P\cong K[x,y]_{(x,y)}$ where $P$ is the center of $\nu_1$ on $R$,
such that the associated graded module
$$
\mbox{gr}_{{\mathcal F}(n)} {\mathcal P}_{n}/{\mathcal P}_n^+
$$
is not a finitely generated $\mbox{gr}_{\overline\nu}R/P$ module
for all positive integers $n$.

\end{Proposition}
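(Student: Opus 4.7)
The plan is to combine the transcendence construction of Lemma \ref{Lemma20} with the finite-generation criterion of Proposition \ref{Theorem5}. First I would construct the rank two valuation $\nu=\nu_1\circ\overline\nu$ on $K(x,y,z)$ from the transcendental series $\underline z\in K((t^{\QQ}))$ produced in Lemma \ref{Lemma20}. Because $\underline z$ is transcendental over $K(x,y)$, the $K$-algebra homomorphism $K[x,y,z]\to B=A[\underline z]_{(x,y,\underline z)}\subset K((t^{\QQ}))$ defined by $z\mapsto\underline z$ is a local isomorphism of three dimensional regular local rings, identifying $R=K[x,y,z]_{(x,y,z)}$ with $B$. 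Using this identification, I would define $\nu$ so that its rank one component $\nu_1$ has value group $\ZZ$ with center a height one prime $P\subset R$ satisfying $R/P\cong K[x,y]_{(x,y)}$, while the residual $\overline\nu$ on the fraction field $K(x,y)$ of $R/P$ is the rank one valuation of value group $\QQ$ fixed at the beginning of this section. The composite $\nu$ then has value group $\ZZ\times\QQ$ with the lex order and dominates $R$.

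The crux of the argument is to identify the $\overline F$-module $F_n=\{\nu(f)\mid f\in R,\ \nu_1(f)=n\}$ of Proposition \ref{Theorem5} with a module closely related to the non-finitely-generated $F_0$-modules $T^n$ of Lemma \ref{Lemma20}. Via the embedding $R\hookrightarrow B\subset K((t^{\QQ}))$ sending $z$ to $\underline z$, the second coordinate of $\nu(f)$ for $f\in R$ with $\nu_1(f)=n$ is controlled by $\overline\nu(f(x,y,\underline z))$, so $F_n$, viewed as a subset of $\QQ$, coincides up to a fixed translation with the set of values $\overline\nu(\sum_{i=0}^{N} a_i\underline z^i)$ for polynomials of controlled degree in $z$ with coefficients in $K[x,y]$. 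This is precisely the object whose non-finite-generation over $F_0=\overline\nu(R/P\setminus\{0\})=\overline F$ is established in Lemma \ref{Lemma20}, and therefore $F_n$ is not a finitely generated $\overline F$-module for any positive integer $n$.

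Applying Proposition \ref{Theorem5} to this $\nu$ and to each positive integer $n$ then converts the failure of finite generation of $F_n$ over $\overline F$ into the failure of finite generation of $\mbox{gr}_{{\mathcal F}(n)}{\mathcal P}_n/{\mathcal P}_n^+$ as a $\mbox{gr}_{\overline\nu}R/P$-module, which is the conclusion of the proposition.

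The principal obstacle is to make the construction of $\nu$ precise enough that its second-coordinate values at the level $\nu_1=n$ genuinely expose the elements $\lambda_i$ from the inductive construction of Lemma \ref{Lemma20} as distinct classes modulo $F_0$. A naive composite of a $P$-adic valuation on $R$ with the residual $\overline\nu$ would only produce $F_n$ as a translate of $F_0$, which is finitely generated; extracting the correct non-finiteness therefore requires leveraging the strict growth conditions $\alpha_i>\tau_i+\overline\nu(g_1\cdots g_{i-1})$ from the construction of $\underline z$, so that the infinitely many new generators created along the way are visible to the second-coordinate component of $\nu$ at every level $n\geq 1$.
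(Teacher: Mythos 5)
There is a genuine gap, and it sits exactly at the point you yourself flag as the ``principal obstacle'': within the setup you choose, that obstacle cannot be overcome. If $R=K[x,y,z]_{(x,y,z)}$ is three-dimensional and $P$ is a prime with $R/P\cong K[x,y]_{(x,y)}$, then $P$ has height one, so $R_P$ is a discrete valuation ring; since a valuation ring of $K(x,y,z)$ dominating another valuation ring of the same field must coincide with it, the only $\ZZ$-valued valuation $\nu_1$ of $K(x,y,z)$ centered at $P$ is $\mbox{ord}_P$. Writing $f=\pi^n g$ with $\pi$ a generator of $P$ and $g\notin P$, the residues $\overline g$ of the elements with $\nu_1(f)=n$ sweep out exactly $(R/P)\setminus\{0\}$, so $F_n=n+F_0$ is a principal $F_0$-module for every $n$. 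The transcendence of $\underline z$ is invisible to this computation because the residue field of $\nu_1$ is $\operatorname{Frac}(R/P)=K(x,y)$: there is no room in it for $\underline z$, and no choice of ``growth conditions'' in the construction of $\underline z$ can change that.

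The paper's proof escapes this by changing the ambient ring: it takes $R=K[x,y,u,v]_{(x,y,u,v)}$, four-dimensional, with $\nu_1$ the $(u,v)$-adic valuation, whose center $P=(u,v)$ has height two and whose residue field is $K(x,y,v/u)$, a purely transcendental extension of $\operatorname{Frac}(R/P)=K(x,y)$. Sending $z=v/u$ to the series $\underline z$ defines $\overline\nu$ on that residue field, and an element $f=\sum a_{ij}u^iv^j$ with $\nu_1(f)=n$ has residue $\sum_j a_{n-j,j}z^j$, a polynomial of degree $\le n$ in $z$ with coefficients in $K[x,y]$; this is what identifies $F_n$ with the non--finitely generated module $T^n$ of Lemma \ref{Lemma20}, after which Proposition \ref{Theorem5} applies as you intend. (The statement of the Proposition as printed, with a three-variable $R$, appears to be a misprint for the four-variable configuration actually constructed in the proof; compare the Remark immediately following it, which emphasizes that $R_{\nu_1}/m_{\nu_1}$ is transcendental over the quotient field of $R/P$, and Proposition \ref{Prop30}, which is stated over $K(x,y,u,v)$.) To repair your argument you must introduce a $\nu_1$ whose residue field strictly contains $\operatorname{Frac}(R/P)$ and carries the transcendental $\underline z$; a height-one center in a three-dimensional regular local ring cannot provide one.
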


\begin{proof}
Since the $\underline z$ we have just constructed is transcendental over $K(x,y)$, the association $z\rightarrow \underline z$ defines
 an embedding of $K$ algebras
$K(x,y,z)\rightarrow K((t^{\QQ}))$ which extends our embedding
$K(x,y)\rightarrow K((t^{\QQ}))$.
We identify $\overline\nu$ with the induced valuation on $K(x,y,z)$, which by our
construction has value group $\QQ$ and residue field $K$.

Let $R$ be the localization
$$
R=K[x,y,u,v]_{(x,y,u,v)}
$$
of a polynomial ring in four variables.
Let $\nu_1$ be the $(u,v)$-adic
valuation of $R$. The valuation ring of the discrete, rank 1 valuation $\nu_1$ is $$
R_{\nu_1}=K\left[x,y,u,\frac{v}{u}\right]_{(u)}.
$$
The residue field of $R_{\nu_1}$ is the rational function field in three variables
$$
R_{\nu_1}/m_{\nu_1}=K(x,y,z)
$$
where $z=\frac{v}{u}$.
Let $\nu$ be the composite valuation $\nu_1\circ \overline\nu$ on $K(x,y,u,v)$.

For $i\in \NN$,  we have
$$
F_i=\{\nu(f)\mid f\in R\mbox{ and }\nu_1(f)=i\}.
$$
$F_0$ is the semigroup $F_0=\overline\nu(R/P\setminus\{0\})$, and $F_i$ are $F_0$ modules for all $i$.

We have that $m_{\nu_1}\cap R=(u,v)$, so that
$F_0=\Gamma_0$, the semigroup of (\ref{eq31}).  From our construction of $\overline\nu$, we have that
$F_n$ is isomorphic to $T^n$ as a $\Gamma_0$ module.
Thus for all $n\ge 1$, $F_n$ is not finitely generated as a $F_0$ module.

By Proposition \ref{Theorem5},
$$
\mbox{gr}_{{\mathcal F}(n)} {\mathcal P}_{n}/{\mathcal P}_n^+
$$
is not a finitely generated $\mbox{gr}_{\overline\nu}R/P$ module
for all positive integers $n$.

\end{proof}

\begin{Remark} In the example of Proposition \ref{Proposition3},
the residue field $R_{\nu_1}/m_{\nu_1}$ is transcendental over the quotient field of $R/P$, a fact which is used in the construction. In Proposition
\ref{Prop30} of the following section, an example is given where $R_{\nu_1}/m_{\nu_1}$ is equal to the quotient field of $R/P$.
\end{Remark}

\begin{Remark} We can easily construct a series $\underline z\in K((t^{\QQ}))$
such that the modules $T^n$ and $T^{\infty}$ of the conclusions of Lemma
\ref{Lemma20} are all finitely generated $\Gamma_0$ modules, and thus the
modules
$$
\mbox{gr}_{{\mathcal F}(n)} {\mathcal P}_{n}/{\mathcal P}_n^+
$$
are  finitely generated $\mbox{gr}_{\overline\nu}R/P$ module
for all positive integers $n$. To make the construction, just take $\underline z\in K[[x,y]]$ to be any transcendental series.
\end{Remark}

\section{An example with no residue field extension}\label{z8}

Suppose that $K$ is an algebraically closed field, and $A=K[x,y]_{(x,y)}$. We will define a
valuation $\overline\nu$ on $L=K(x,y)$ which dominates $A$, with  value group $\bigcup_{i=0}^{\infty}\frac{1}{2^i}\ZZ$.

Define $\overline\beta_0=1$ and $\overline\beta_{i+1}=2\overline\beta_i+\frac{1}{2^{i+1}}$ for
$i\ge 0$.
We have
$$
\overline\beta_i=\frac{1}{3}(2^{i+2}-\frac{1}{2^i})
$$
for $i\ge 0$, and
$$
2\overline\beta_i=\overline\beta_{i-1}+2^{i+1}\overline\beta_0
$$
for $\ge 1$.

Define groups
$$
\Gamma_i=\sum_{j=0}^i\ZZ\overline\beta_j=\frac{1}{2^i}\ZZ
$$
for $i\ge 0$.

For $i\ge 1$, let
$$
x_i=\frac{2^{i+1}\overline \beta_{i-1}+1}{2^{i-1}},
$$
and let $m_i=2$.

Since $2^{i+1}\overline\beta_{i-1}$ is an even integer for all $i$, $x_i$ has order $m_i=2$ in
$\Gamma_{i-1}/m_i\Gamma_{i-1}$ and $\overline\beta_i=\frac{x_i}{m_i}$.

By our construction, for $i\ge 1$, $\overline\beta_{i+1}>m_i\overline\beta_i$. By the
irreducibility criterion of \cite{CM-S}, Remark 7.17 \cite{CP}, or Theorem 2.22 \cite{FJ},  there exists a valuation $\overline\nu$ of $L$
dominating $A$ and
 a (minimal) generating sequence $P_0,P_1,\cdots,P_i,\cdots$
for $\overline\nu$ in $K[x,y]$ of the form
$$
\begin{array}{l}
P_0=x\\
P_1=y\\
P_2=y^{2}-x^{5}\\
P_3=P_2^2-x^8y\\
\vdots\\
P_{i+1}=P_i^{2}-P_0^{2^{i+1}}P_{i-1}\\
\vdots
\end{array}
$$
 with $\overline\beta_i=\nu(P_i)$ for all $i$.  The semigroup $M_0$ of $\overline\nu$ on $A$ is
$$
M_0=\sum_{i=0}^{\infty}\NN\overline\beta_i.
$$
Let $z=\frac{y}{x}\in L$, and define for $n\in \NN$,
$$
W_n=\{a_0+a_1z+\cdots+a_nz^n\mid a_0,\ldots,a_n\in K[x,y]\}.
$$
Define  $M_0$ modules $M_n$ by
$$
M_n=\{\overline\nu(f)\mid 0\ne f\in W_n\}.
$$
\begin{Lemma}\label{Lemma12} For all $i\ge 0$. we have an expression
$P_i=x^ih_i$ with $h_i\in W_i$.
\end{Lemma}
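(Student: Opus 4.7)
The plan is to recast the claim as a combinatorial condition on the polynomial $P_i \in K[x,y]$ itself and then verify it directly by induction on the recursion.

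The first step is to establish the following characterization of $W_n$: an element $h \in K(x,y)$ lies in $W_n$ if and only if $x^n h$ is a polynomial in $K[x,y]$ all of whose nonzero monomials have total degree at least $n$. One direction is immediate, since each summand $a_j x^{n-j} y^j$ (with $a_j \in K[x,y]$) has every monomial of total degree $\geq n$. For the converse, any monomial $x^\alpha y^\beta$ with $\alpha + \beta \geq n$ factors as $x^{n-j} y^j \cdot \mu$ with $\mu \in K[x,y]$ a monomial, on taking $j = \max(0, n-\alpha) \in [0,n]$; summing such factorizations over all monomials of $x^n h$ produces a presentation $x^n h = \sum_{j=0}^n a_j x^{n-j} y^j$ with $a_j \in K[x,y]$.

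With this in hand, the lemma becomes: for every $i \geq 0$, each nonzero monomial of $P_i$ has total degree $\geq i$. I would prove this by induction. The bases $P_0 = x$ and $P_1 = y$ each consist of a single monomial of total degree $1$, which is $\geq i$ for $i \in \{0,1\}$. For the inductive step with $i \geq 1$, apply $P_{i+1} = P_i^2 - x^{2^{i+1}} P_{i-1}$: by hypothesis, every monomial of $P_i^2$ has degree $\geq 2i \geq i+1$, and every monomial of $x^{2^{i+1}} P_{i-1}$ has degree $\geq 2^{i+1} + (i-1) \geq i+1$. Since the property ``every monomial has degree $\geq i+1$'' is closed under $K$-linear combinations---monomials can only cancel, never migrate to smaller total degree---the difference $P_{i+1}$ inherits the property.

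The only step that requires any real thought is the characterization of $W_n$; once it is recorded, the remainder is elementary degree bookkeeping using the recursion, and no genuine obstacle is anticipated.
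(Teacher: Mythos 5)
Your proof is correct, and it takes a genuinely different route from the paper's. The paper proves the lemma by direct induction inside the $W_n$ modules: it writes $P_i^2 = x^{i+1}\left[\left(P_i/x^i\right)^2 x^{i-1}\right]$ and checks, via careful bookkeeping of powers of $x$ and degrees in $z$, that the bracketed factor lies in $W_{2i-(i-1)} = W_{i+1}$, and similarly that $P_0^{2^{i+1}}P_{i-1} \in x^{i+1}W_{i+1}$. You instead observe up front that $h \in W_n$ exactly when $x^n h$ is a polynomial all of whose monomials have total degree $\ge n$, which converts the lemma into the purely combinatorial statement that every monomial of $P_i$ has total degree $\ge i$; the induction then reduces to adding degrees through the recursion $P_{i+1} = P_i^2 - x^{2^{i+1}}P_{i-1}$, which is immediate. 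What your reformulation buys is transparency: the inductive step needs no manipulation of the $W_n$ modules at all, and the degree estimate $2^{i+1} + (i-1) \ge i+1$ is trivial. What the paper's approach buys is that it stays inside the $W_n$ framework used throughout the section, which may make the subsequent lemmas read more uniformly. Both are valid; yours is arguably the cleaner argument.
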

\begin{proof}  The statement is clear for $i=0$ and $i=1$.
Suppose, by induction, that the statement is true for $j\le i$, so that $P_j=x^jh_j$ with $h_j\in
W_j$ for $j\le i$. Write
$$
P_{i+1}=P_i^2-P_0^{2^{i+1}}P_{i-1}.
$$
$$
P_i^2=x^{i+1}\left[\left(\frac{P_i}{x^i}\right)\left(\frac{P_i}{x^i}\right)x^{i-1}\right]
$$
with
$$
\left(\frac{P_i}{x^i}\right)\left(\frac{P_i}{x^i}\right)x^{i-1}\in W_{2i-(i-1)}=W_{i+1}.
$$
Further,
$$
P_0^{2^{i+1}}P_{i-1}=x^{2^{i+1}}x^{i-1}h_{i-1}\in x^{i+1}W_{i-1}\subset x^{i+1}W_{i+1}.
$$
Thus $P_{i+1}=x^{i+1}h_{i+1}$ with $h_{i+1}\in W_{i+1}$.
\end{proof}

For $n\in \NN$, let
$$
U_n=\{\lambda\in M_n\mid \lambda=\sum_{j=0}^{n-1}l_j\overline\beta_j\mbox{ for some }l_i\in\ZZ\}.
$$

\begin{Lemma}\label{Lemma10}
For $n\ge 1$,
$$
M_n=U_n\bigcup\left(\bigcup_{j\ge n}((\overline\beta_j-n)+M_0)\right).
$$
\end{Lemma}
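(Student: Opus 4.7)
The plan is to identify $M_n$ with a translate of $\overline\nu((x,y)^n)$ and then use the description $M_0=\sum_{i\ge 0}\NN\overline\beta_i$ to split cases. The initial observation is that $x^nW_n=(x,y)^n$: multiplying the $K[x,y]$-module generators $1,z,\ldots,z^n$ of $W_n$ by $x^n$ and using $xz=y$ yields $x^n,x^{n-1}y,\ldots,y^n$, which generate the ideal $(x,y)^n$. Consequently $\lambda\in M_n$ iff $\lambda+n\in\overline\nu((x,y)^n\setminus\{0\})$, so in particular $\lambda+n\in M_0$.

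For the inclusion $\supseteq$: $U_n\subseteq M_n$ is immediate from the definition of $U_n$. For $j\ge n$, Lemma \ref{Lemma12} writes $P_j=x^jh_j$ with $h_j\in W_j$, so $P_j\in x^jW_j=(x,y)^j\subseteq(x,y)^n$. Given any $\gamma\in M_0$, pick $g\in A\setminus\{0\}$ with $\overline\nu(g)=\gamma$; then $gP_j\in(x,y)^n$ has value $\overline\beta_j+\gamma$, whence $\overline\beta_j-n+\gamma\in M_n$.

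For the inclusion $\subseteq$: given $\lambda\in M_n$ set $v=\lambda+n\in M_0$. If $v\in\Gamma_{n-1}$, then $\lambda\in\Gamma_{n-1}$; since $\overline\beta_0,\ldots,\overline\beta_{n-1}$ generate $\Gamma_{n-1}$ as a $\ZZ$-module, $\lambda$ is an integer combination of them and hence $\lambda\in U_n$. Otherwise, using $M_0=\sum_{i\ge 0}\NN\overline\beta_i$, write $v=\sum_{i\ge 0}c_i\overline\beta_i$ with $c_i\in\NN$ finitely supported. Since $\sum_{i<n}c_i\overline\beta_i\in\Gamma_{n-1}$ while $v\notin\Gamma_{n-1}$, some $c_i$ with $i\ge n$ must be positive; let $j=\max\{i:c_i>0\}$, so $j\ge n$. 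Reducing $c_j$ by one yields a non-negative integer expression for $v-\overline\beta_j$, so $v-\overline\beta_j\in M_0$ and $\lambda=(\overline\beta_j-n)+(v-\overline\beta_j)\in(\overline\beta_j-n)+M_0$.

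The only point requiring some care is the identification $x^nW_n=(x,y)^n$, which converts the question from $K[x,y]$-combinations of powers of $z=y/x$ into valuations of the ideals $(x,y)^k$; once this is in hand, the rest is a short case analysis in $M_0$ using the nested group chain $\Gamma_0\subsetneq\Gamma_1\subsetneq\cdots$.
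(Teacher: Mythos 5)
Your proof is correct and follows essentially the same route as the paper's: the identification $x^nW_n=(x,y)^n$ is exactly the paper's step of passing from $a_0+a_1z+\cdots+a_nz^n$ to $\tau=\overline\nu(a_0x^n+a_1x^{n-1}y+\cdots+a_ny^n)=\lambda+n\in M_0$, and the case split on whether an $\NN$-expression for $v=\lambda+n$ in the $\overline\beta_i$ involves some $\overline\beta_j$ with $j\ge n$ is the same dichotomy (your reformulation via $v\in\Gamma_{n-1}$ versus $v\notin\Gamma_{n-1}$ is a slightly cleaner, representation-independent way to state it). No gaps.
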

\begin{proof}
$\frac{P_j}{x^j}\in W_j$ for all $j\ge 0$ implies
$$
\frac{P_j}{x^n}=\left(\frac{P_j}{x^j}\right)x^{j-n}\in W_n
$$
for $j\ge n$. Thus $\overline\beta_j-n\in M_n$ for $j\ge n$.

Now suppose that $\lambda\in M_n$. $\lambda=\overline\nu(a_0+a_1z+\cdots+a_nz^n)$ for some
$a_0,\ldots,a_n\in K[x,y]$.   Set $\tau=\overline\nu(a_0x^n+a_1x^{n-1}y+\cdots+a_ny^n)\in M_0$. We have
$\lambda=\tau-n$. $\tau=\sum l_j\overline\beta_j=\overline\nu(\prod P_j^{l_j})$ for some $l_j\in \NN$.
Suppose $l_k\ne 0$ for some $k\ge n$. Then
$$
\lambda=\overline\nu\left(\left(\prod_{j\ne k}P_j^{l_j}\right)P_k^{l_k-1}\frac{P_k}{x^n}\right)=
(\overline\beta_k-n)+
\overline\nu\left(\left(\prod_{j\ne k}P_j^{l_j}\right)P_k^{l_k-1}\right).
$$
Now suppose that $l_k=0$ for $k<n$. Then
$$
\lambda=\sum_{j=1}^{n-1}l_j\overline\beta_j+(l_0-n)\overline\beta_0\in U_n.
$$
\end{proof}

\begin{Lemma}\label{Lemma11} Suppose that $n\ge 1$. Then $M_n$ is not a finitely generated $M_0$
module.

Let $B=K[x,\frac{y}{x}]_{(x,\frac{y}{x})}$. B is a regular local ring which
birational dominates $A=K[x,y]_{(x,y)}$. Let $M_{\infty}=\overline\nu(B\setminus\{0\})$.
Then $M_{\infty}$ is not a finitely generated module
over $\Gamma_0=\overline\nu(A\setminus\{0\})$.
\end{Lemma}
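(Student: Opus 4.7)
The proof is by contradiction, exploiting two features of the $\overline\beta_i$: their rapid growth (they are strictly increasing in $i$) and the fact that each contributes a genuinely new $2$-adic denominator, i.e.\ $\overline\beta_i \in \Gamma_i \setminus \Gamma_{i-1}$ for $i\ge 1$. The latter holds because $\overline\beta_i = (2^{2i+2}-1)/(3\cdot 2^i)$ with the numerator odd, so $\overline\beta_i\notin \tfrac{1}{2^{i-1}}\ZZ$. A finite list of proposed generators can only reach elements with bounded $2$-adic denominator, but Lemma \ref{Lemma10} produces elements of $M_n$ with arbitrarily fine denominator, from which the contradiction will fall out.

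For the first claim, suppose for contradiction that $M_n=\bigcup_{\ell=1}^k (g_\ell + M_0)$ with $g_\ell \in M_n \subset \QQ_{\ge 0}$. Each $g_\ell$ lies in the value group $\bigcup_i \Gamma_i$, so choose $N\ge n$ with $g_\ell\in\Gamma_N$ for every $\ell$. By Lemma \ref{Lemma10}, $\overline\beta_{N+2}-n\in M_n$, so
$$\overline\beta_{N+2}-n = g_\ell + m$$
for some $\ell$ and some $m\in M_0$. Both $n$ and $g_\ell$ lie in $\Gamma_{N+1}$ while $\overline\beta_{N+2}$ does not, hence $m\notin\Gamma_{N+1}$. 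Now $(P_i)$ is a generating sequence for $\overline\nu$, so $M_0=\sum_i \NN\overline\beta_i$ as a semigroup; write $m=\sum_i c_i\overline\beta_i$ with $c_i\in\NN$. Since $\overline\beta_0,\ldots,\overline\beta_{N+1}\in\Gamma_{N+1}$, the condition $m\notin\Gamma_{N+1}$ forces some $c_j>0$ with $j\ge N+2$. All coefficients and all $\overline\beta_i$ being nonnegative, $m\ge c_j\overline\beta_j\ge \overline\beta_{N+2}$, whence $\overline\beta_{N+2}-n = g_\ell + m \ge \overline\beta_{N+2}$, giving $n\le 0$ and contradicting $n\ge 1$.

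For $M_\infty$: since $M_\infty=\bigcup_n M_n$, any putative finite generating set over $\Gamma_0=M_0$ lies in some $\Gamma_N$; Lemma \ref{Lemma10} now supplies $\overline\beta_{N+2}-1 \in M_1\subset M_\infty$ and the identical computation reproduces the contradiction. The delicate point in the whole argument is the positivity step: one must use that $M_0$ is precisely (not merely contained in) the $\NN$-span of the $\overline\beta_i$, so that nonnegativity of the $c_i$ forces $m\ge \overline\beta_{N+2}$. This is where the generating-sequence property does the real work, pairing the additive structure of $M_0$ with the denominator filtration $\Gamma_{N+1}\subsetneq\Gamma_{N+2}$ to rule out any bounded reach for a finite set of generators.
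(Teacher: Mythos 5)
Your proof is correct and follows the paper's strategy: assume a finite set of generators, bound them in some $\Gamma_N$, and then use Lemma~\ref{Lemma10} to exhibit an element $\overline\beta_j-n\in M_n$ with $j$ large that cannot be written as a generator plus an element of $M_0$. The one place you differ is a welcome tightening: the paper's displayed inclusion $\Psi_i\subset\frac{1}{2^i}\NN$ is not literally true as stated (e.g.\ $(\overline\beta_n-n)+\overline\beta_{i+1}\in\Psi_i$ has denominator $2^{i+1}$), whereas your explicit combination of the positivity of the $\NN$-coefficients in $M_0=\sum_i\NN\overline\beta_i$ with the $2$-adic denominator jump $\overline\beta_j\notin\Gamma_{j-1}$ is exactly the argument needed to make the intended non-membership conclusion go through.
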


\begin{proof}
For $i\ge 0$, define $\Psi_i$ to be the $M_0$ module generated by $U_n$ and $\{
\overline\beta_j-n\mid i\ge j\ge n \}$.
For $i\ge n$ we have
$$
\Psi_i = U_n\bigcup \left(
\bigcup_{i\ge j\ge n}((\overline\beta_j-n)+M_0)\right)\subset
\frac{1}{2^i}\NN.
$$
Thus $\overline\beta_{i+1}-1\not\in\Psi_i$ for $i\ge n$.

We will now show that $M_n$ is not a finitely generated $M_0$ module.

Suppose that $M_n$ is finitely generated as an $M_0$ module. Then $M_n$ is generated by a set
$a_1,\ldots,a_r,b_1,\ldots,b_s$, where
$$
a_i=(\overline\beta_{\sigma(i)}-n)+\lambda_i
$$
with $\sigma(i)\ge n$ and $\lambda_i\in M_0$ for $1\le i\le r$ and $b_i\in U_n$ for $1\le i\le s$.

Let $m=\mbox{max}\{\sigma(i)\}$. Then $M_1\subset \Psi_m$, which is impossible since
$\overline\beta_{m+1}-n\not\in \Psi_m$.

The proof that $M_{\infty}$ is not a finitely generated $\Gamma_0$ module is similar.
\end{proof}

\begin{Proposition}\label{Prop30}
Suppose that $K$ is an algebraically closed field, and $K(x,y,u,v)$ is a rational function field in 4 variables.

Suppose that $\Gamma$ is a totally  ordered Abelian group  and $\alpha\in\Gamma$ is such that 1 and $\alpha$ are
rationally independent and $1<\alpha$.

Then there exists a rank 2 valuation $\nu=\nu_1\circ\overline\nu$ of $K(x,y,u,v)$ with value group
$$
\left(\ZZ+\alpha\ZZ\right)\times\left(\bigcup_{i=0}^{\infty}\frac{1}{2^i}\ZZ\right)
$$
in the Lex order, which dominates the regular local ring
$$
R=K[x,y,u,v]_{(x,y,u,v)}
$$
such that
\begin{enumerate}
\item $R_{\nu_1}/m_{\nu_1}\cong R/P\cong K[x,y]_{(x,y)}$ where $P$ is the center of $\nu_1$ on $R$.
\item  The associated graded module
$$
\mbox{gr}_{{\mathcal F}(n)}{\mathcal P}_n/{\mathcal P}^+_n
$$
is not  a  finitely generated  $\mbox{gr}_{\overline\nu}R/P$ module for  $n\in\NN$ a positive integer.
\item  The associated graded module
$$
\mbox{gr}_{{\mathcal F}(n\alpha)}{\mathcal P}_{n\alpha}/{\mathcal P}^+_{n\alpha}
$$
is a  finitely generated  $\mbox{gr}_{\overline\nu}R/P$ module for  $n\in\NN$.
\end{enumerate}
\end{Proposition}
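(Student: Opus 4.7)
The plan is to build $\nu$ as a composite of the pullback of a $t$-adic valuation along a carefully chosen embedding with the valuation $\overline\nu$ of $K(x,y)$ constructed at the beginning of this section. Define $\iota\colon K(x,y,u,v)\hookrightarrow K(x,y)((t^{\ZZ+\alpha\ZZ}))$ to be the identity on $K(x,y)$ with $\iota(u)=t$ and $\iota(v)=tz+t^{\alpha}$, where $z=y/x$. To justify that $\iota$ extends to an embedding of fields I would verify the algebraic independence of $t$ and $tz+t^{\alpha}$ over $K(x,y)$: any hypothetical relation $\sum p_{ij}(x,y)t^i(tz+t^{\alpha})^j=0$ expands via the binomial theorem to a sum over $t$-exponents of the form $m+k\alpha$, and the rational independence of $1$ and $\alpha$ together with induction on the ``$\alpha$-degree'' $k$ forces every $p_{ij}$ to vanish.

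\textbf{Data of $\nu_1$ and property (1).} Let $\nu_1$ be the pullback of the $t$-adic valuation of $K(x,y)((t^{\ZZ+\alpha\ZZ}))$. Direct computation gives $\nu_1(u)=\nu_1(v)=1$, and the key cancellation $\iota(vx-uy)=x(tz+t^{\alpha})-yt=xt^{\alpha}$ yields $\nu_1(vx-uy)=\alpha$, so the value group is $\ZZ+\alpha\ZZ$. The residue of $v/u=z+t^{\alpha-1}$ is $z=y/x\in K(x,y)$, so $R_{\nu_1}/m_{\nu_1}$ contains, and hence equals, $K(x,y)$. The center of $\nu_1$ on $R$ is the prime $(u,v)$, giving $R/P\cong K[x,y]_{(x,y)}$ whose fraction field is $K(x,y)$; this is property (1). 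The composite $\nu=\nu_1\circ\overline\nu$ then has the prescribed lex value group $(\ZZ+\alpha\ZZ)\times\bigl(\bigcup_{i=0}^\infty\tfrac{1}{2^i}\ZZ\bigr)$ and dominates $R$.

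\textbf{Analysis of $F_\gamma$.} By Proposition~\ref{Theorem5}, properties (2) and (3) reduce to showing that the $M_0$-modules $F_\gamma=\{\nu(f)\mid f\in R,\ \nu_1(f)=\gamma\}$ are not finitely generated for $\gamma=n$ a positive integer, and are finitely generated for $\gamma=n\alpha$ with $n\in\NN$, where $M_0=\overline\nu(A\setminus\{0\})$ and $A=K[x,y]_{(x,y)}$. A binomial expansion shows that for $g=\sum g_{ab}u^av^b\in K[x,y,u,v]$ the coefficient of $t^{c+d\alpha}$ in $\iota(g)$ is $\sum_{j=0}^{c}g_{c-j,d+j}(x,y)\binom{d+j}{j}z^j$. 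For $f=g/h\in R$ the denominator satisfies $h(0)\ne 0$, hence $\nu_1(h)=0$ with unit residue in $A$, so the second coordinate of $\nu(f)$ is $\overline\nu$ of the leading coefficient of $\iota(g)$. At level $\gamma=n$ this leading coefficient is $\sum_{j=0}^n g_{n-j,j}(x,y)z^j\in W_n$, and the choice $g=\sum_j a_j u^{n-j}v^j$ realises every element of $W_n\setminus\{0\}$; thus $F_n=M_n$ as $M_0$-modules, and Lemma~\ref{Lemma11} gives (2). At level $\gamma=n\alpha$ the leading coefficient is $g_{0,n}(x,y)$, and a descent on the vanishing of lower-exponent coefficients forces $g_{0,n}\in x^nK[x,y]$; the choice $g=h(x,y)(vx-uy)^n$ attains every such value, so $F_{n\alpha}$ is singly generated over $M_0$ (by $n\overline\nu(x)$ in its second coordinate), giving (3).

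\textbf{Main obstacle.} The step I expect to be most delicate is the reverse inclusion for $F_{n\alpha}$, namely that every $f\in R$ with $\nu_1(f)=n\alpha$ must have $g_{0,n}$ divisible by $x^n$. The cleanest path is to change coordinates to $w=v-uz$, so that $\iota(w)=t^{\alpha}$ and $g(x,y,u,v)=\tilde g(x,y,u,w)$ becomes a polynomial in $u,w$ over $K(x,y)$; the condition $\nu_1(g)=n\alpha$ is then equivalent to the vanishing of $\tilde g_{a,b}$ for $a+b\alpha<n\alpha$, while the integrality condition $g\in K[x,y,u,v]$ translates, through the substitution $w=v-u(y/x)$, into the required $x$-adic divisibility of $\tilde g_{0,n}=g_{0,n}$. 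Once this divisibility is pinned down, the whole argument hinges on the explicit cancellation $\iota(vx-uy)=xt^{\alpha}$ built into the choice of $\iota$, which is what simultaneously creates the element of $\nu_1$-value $\alpha$ and confines the leading coefficients at level $n$ to $W_n$.
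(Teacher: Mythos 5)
Your construction coincides with the paper's: the same embedding $u\mapsto t$, $v\mapsto \frac{y}{x}t+t^{\alpha}$, the same identification of the center $P=(u,v)$ and residue field for (1), the same reduction via Proposition \ref{Theorem5} to showing that $F_n\cong M_n$ as an $F_0$-module (hence not finitely generated by Lemma \ref{Lemma11}) and that $F_{n\alpha}=n+F_0$. The only cosmetic difference is in the key divisibility $x^n\mid g_{0,n}$ for (3): you obtain it by the coordinate change $w=v-u\frac{y}{x}$, while the paper expresses the same vanishing conditions as $\frac{y}{x}$ being an $n$-fold root of $\Phi_{0n}(W)$, so that $\Phi_{0n}(W)=a_{0n}(W-\frac{y}{x})^n$; both are correct and essentially the same computation.
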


\begin{proof}

We use the notation developed earlier in this section.
Define a valuation $\nu_1$ on the rational function field $L(u,v)$ in 2 variables  by the embedding of $L$ algebras
$$
L(u,v)\rightarrow L((t^{\Gamma}))
$$
induced by
$$
u\mapsto t, v\mapsto v(t)=\frac{y}{x}t+t^{\alpha}.
$$
Let $\nu=\nu_1\circ\overline\nu$ be the composite valuation on $K(x,y,u,v)$.
$\nu$ dominates $R=K[x,y,u,v]_{(x,y,u,v)}$.
The center of $\nu_1$ on $R$ is the prime ideal $P=(u,v)$. We have $L=(R/P)_{P}=R_{\nu_1}/m_{\nu_1}$ and $K=R_{\nu}/m_{\nu}$.

For $i\in\NN$,
$$
F_i=\{\nu(f)\mid f\in R\mbox{ and }\nu_1(f)=i\}.
$$
Suppose that $f\in K[x,y,u,v]$. Expand
$$
f=\sum a_{ij}u^iv^j
$$
with $a_{ij}\in K[x,y]$. We have
$$
f(t,v(t))=a_{00}+(a_{10}+a_{01}\frac{y}{x})t+\mbox{ higher order terms in $t$}.
$$
We see that $\nu_1(f)=0$ if and only if $a_{00}\ne 0$. Thus $F_0\cong M_0$
as semigroups.

$\nu_1(f)=1$ if and only  if $a_{00}=0$ and $a_{10}+a_{01}\frac{y}{x}\ne 0$. Thus $F_1\cong M_1$ as $F_0$ modules, so that $F_1$ is not finitely generated as an $F_0$ module.

To see that $F_n\cong M_n$ as an $F_0$ module for all $n\ge 0$, we expand

$$
\begin{array}{lll}
f(t,v(t))&=&\sum_{k=0}^{\infty}\sum_{i+j=k}a_{ij}t^iv(t)^j\\
&=& \sum_{k=0}^{\infty}\sum_{j=0}^k\phi_{jk}t^{(k-j)+j\alpha}
\end{array}
$$
where
$$
\phi_{jk}=\sum_{i=j}^ka_{k-i,i}\binom{i}{j}\left(\frac{y}{x}\right)^{i-j}.
$$
We have $(k_1-j_1)+j_1\alpha=(k_2-j_2)+j_2\alpha$ implies $j_1=j_2$ and $k_1=k_2$. Thus
$$
\nu_1(f)=\mbox{min}\{(k-j)+j\alpha\mid \phi_{jk}\ne 0\}.
$$
We further have for fixed $k$, $j_1<j_2$ implies
\begin{equation}\label{eq15}
(k-j_1)+j_1\alpha<(k-j_2)+j_2\alpha.
\end{equation}

Suppose that $\nu_1(f)=n\in\NN$. Then
$$
f(t,v(t))=\phi_{0n}t^n+\mbox{ higher order terms in $t$}
$$
with
$$
\phi_{0n}=a_{0n}+a_{n-1,1}\frac{y}{x}+\cdots+a_{0n}\left(\frac{y}{x}\right)^n\ne 0.
$$
Further, by (\ref{eq15}), we see that for $n\in\NN$ and $a_{n0},a_{n-1,1},\ldots,a_{0n}\in K[x,y]$,
$$
\nu_1(a_{n0}u^n+a_{n-1,1}u^{n-1}v+\cdots+a_{0n}v^n)=n
$$
if and only if
$$
a_{n0}+a_{n-1,1}\frac{y}{x}+\cdots+a_{0n}\left(\frac{y}{x}\right)^n\ne 0.
$$
Thus for $n\in\NN$,
$$
F_n\cong \{\overline\nu(h)\mid h\in W_n\mbox{ and }h\ne 0\}
$$
which is isomorphic to $M_n$ as an $M_0$ module. Since $M_n$ is not finitely generated as an $M_0$ module, we obtain (2) of the conclusions of the Proposition
from Proposition \ref{Theorem5}.

Let us now consider the polynomial $$
\Phi_{jk}(W)=\sum_{i=j}^ka_{k-i,i}\binom{i}{j}W^{i-j},$$
and remark that we have the equalities
$$\frac{\partial^j\Phi_{0k}(W)}{\partial W^j}=j! \Phi_{jk}(W).$$
In order for the series $f(t,v(t))$ to be of order $n\alpha$, all the $\phi_{jk}$ must be zero for $(k-j)+j\alpha<n\alpha$,
while $\phi_{nn}=a_{0n}$ must be non zero. In particular,
$\phi_{jn}$ must be zero for $j<n$. \par
In view of the equalities we have just seen, the $n$ conditions $\phi_{jn}=0, \ 0\leq j\leq n-1$ are equivalent to the fact that $\frac{y}{x}$ is a root of order $n$ of the polynomial $\Phi_{0n}(W)$, so that $\Phi_{0n}(W)=a_{0n}(W-\frac{y}{x})^n$. From this it follows that the $a_{n-j,j}$ for $j<n$ are determined and the only condition on $a_{0n}$ is that $x^n$ divides $a_{0n}$.
The elements of $F_{n\alpha}$ coincide with the values of $\nu$ on $K[x,y]$
translated by $n$, so that for each $n\in \NN$ we have $F_{n\alpha}=n+F_0\cong F_0$. Thus (3) of the conclusions of the Proposition follows from Proposition \ref{Theorem5}.
\end{proof}

\par\vskip.5truecm\noindent
Dale Cutkosky,  \hfill Bernard Teissier,\par\noindent
202 Mathematical Sciences Bldg,\hfill  Institut Math\'{e}matique de Jussieu\par\noindent

University of Missouri,\hfill UMR 7586 du CNRS,\par\noindent
Columbia, MO 65211 USA\hfill 175 Rue du Chevaleret, 75013 Paris,France \par\noindent
cutkoskys@missouri.edu\hfill teissier@math.jussieu.fr\par\noindent

\end{document}